\newcommand{\A}{\mathrm{A}}
\newcommand{\Alt}{\mathrm{Alt}}
\newcommand{\Sy}{\mathrm{S}}
\newcommand{\Sym}{\mathrm{Sym}}
\newcommand{\Soc}{\mathrm{Soc}}
\newcommand{\Aut}{\mathrm{Aut}}
\newcommand{\Out}{\mathrm{Out}}
\newcommand{\Hol}{\mathrm{Hol}}
\newcommand{\Cos}{\mathrm{Cos}}
\newcommand{\Inn}{\mathrm{Inn}}
\newtheorem{theorem}{Theorem}[section]
\newtheorem{lemma}[theorem]{Lemma}
\newtheorem{proposition}[theorem]{Proposition}
\newtheorem{corollary}[theorem]{Corollary}
\theoremstyle{definition}
\newtheorem{notation}[theorem]{Notation}
\newtheorem{construction}[theorem]{Construction}
\newtheorem{remark}[theorem]{Remark}
\newtheorem{question}[theorem]{Question}
\begin{document}

\title{Vertex-quasiprimitive $2$-arc-transitive digraphs}

\author[Giudici]{Michael Giudici}
\address{School of Mathematics and Statistics\\University of Western Australia\\ Crawley 6009, WA\\ Australia}
\email{michael.giudici@uwa.edu.au}

\author[Xia]{Binzhou Xia}
\address{School of Mathematics and Statistics\\University of Western Australia\\ Crawley 6009, WA\\ Australia}
\email{binzhou.xia@uwa.edu.au}

\maketitle

\begin{abstract}
We study vertex-quasiprimitive $2$-arc-transitive digraphs, reducing the problem of vertex-primitive $2$-arc-transitive digraphs to almost simple groups. This includes a complete classification of vertex-quasiprimitive $2$-arc-transitive digraphs where the action on vertices has O'Nan-Scott type SD or CD.

\textit{Key words:} digraphs; vertex-quasiprimitive; $2$-arc-transitive

\textit{MSC2010:} 05C20, 05C25
\end{abstract}

\section{Introduction}

A \emph{digraph} $\Gamma$ is a pair $(V,\rightarrow)$ with a set $V$ (of vertices) and an antisymmetric irreflexive binary relation $\rightarrow$ on $V$. All digraphs considered in this paper will be finite. For a non-negative integer $s$, an \emph{$s$-arc} of $\Gamma$ is a sequence $v_0,v_1,\dots,v_s$ of vertices with $v_i\rightarrow v_{i+1}$ for each $i=0,\dots,s-1$. A $1$-arc is also simply called an \emph{arc}. We say $\Gamma$ is \emph{$s$-arc-transitive} if the group of all automorphisms of $\Gamma$ (that is, all permutations of $V$ that preserve the relation $\rightarrow$) acts transitively on the set of $s$-arcs. More generally, for a group $G$ of automorphisms of $\Gamma$, we say $\Gamma$ is \emph{$(G,s)$-arc-transitive} if $G$ acts transitively on the set of $s$-arcs of $\Gamma$.

A transitive permutation group $G$ on a set $\Omega$ is said to be \emph{primitive} if $G$ does not preserve any nontrivial partition of $\Omega$, and is said to be \emph{quasiprimitive} if each nontrivial normal subgroup of $G$ is transitive. It is easy to see that primitive permutation groups are necessarily quasiprimitive, but there are quasiprimitive permutation groups that are not primitive. We say a digraph is \emph{vertex-primitive} if its automorphism group is primitive on the vertex set. The aim of this paper is to investigate finite vertex-primitive $s$-arc transitive digraphs with $s\geqslant2$. However, we will often work in the more general quasiprimitive setting.

There are many $s$-arc-transitive digraphs, see for example~\cite{CLP1995,MS2001,MS2004,Praeger1989}. In particular, for every integer $k\geqslant2$ and every integer $s\geqslant1$ there are infinitely many $k$-regular $(G,s)$-arc-transitive digraphs with $G$ quasiprimitive on the vertex set (see the proof of Theorem~1 of \cite{CLP1995}). On the other hand, the first known family of vertex-primitive $2$-arc-transitive digraphs besides directed cycles was only recently discovered in \cite{GLX}. The digraphs in this family are not $3$-arc-transitive, which prompted the following question:

\begin{question}\label{qes1}
Is there an upper bound on $s$ for vertex-primitive $s$-arc-transitive digraphs that are not directed cycles?
\end{question}

The O'Nan-Scott Theorem divides the finite primitive groups into eight types and there is a similar theorem for finite quasiprimitive groups, see \cite[Section~5]{Praeger1997}). For four of the eight types, a quasiprimitive group of that type has a normal regular subgroup. Praeger \cite[Theorem~3.1]{Praeger1989} showed that if $\Gamma$ is a $(G,2)$-arc-transitive digraph and $G$ has a normal subgroup that acts regularly on $V$, then $\Gamma$ is a directed cycle. Thus to investigate vertex-primitive  and vertex-quasiprimitive 2-arc-transitive digraphs, we only need to consider the four remaining types. One of these types is where $G$ is an almost simple group, that is, where $G$ has a unique minimal normal subgroup $T$, and $T$ is a nonabelian simple group. The examples of primitive 2-arc-transitive digraphs constructed in \cite{GLX} are of this type. This paper examines the remaining three types, which are labelled SD, CD and PA, and reduces Question~\ref{qes1} to almost simple vertex-primitive groups (Corollary~\ref{cor1}). We now define these three types and state our results.

We say that a quasiprimitive group $G$ on a set $\Omega$ is of type $\mathrm{SD}$ if $G$ has a unique minimal normal subgroup $N$, there exists a nonabelian simple group $T$ and positive integer $k\geqslant2$ such that $N\cong T^k$, and for $\omega\in\Omega$, $N_\omega$ is a full diagonal subgroup of $N$ (that is, $N_\omega\cong T$ and projects onto $T$ in each of the $k$ simple direct factors of $N$). It is incorrectly claimed in~\cite[Lemma~4.1]{Praeger1989} that there is no $2$-arc-transitive digraph with a vertex-primitive group of automorphisms of type $\mathrm{SD}$. However, there is an error in the proof which occurs when concluding ``$\sigma x$ also fixes $D\mathbf{t}^{-1}$". Indeed, given a nonabelian simple group $T$, our Construction~\ref{Construction} yields a $(G,2)$-arc-transitive digraph $\Gamma(T)$ with $G$ primitive of type $\mathrm{SD}$. These turn out to be the only examples.

\begin{theorem}\label{thm1}
Let $\Gamma$ be a connected $(G,2)$-arc-transitive digraph such that $G$ acts quasiprimitively of type $\mathrm{SD}$ on the set of vertices. Then there exists a nonabelian simple group $T$ such that $\Gamma\cong\Gamma(T)$, as obtained from \emph{Construction~\ref{Construction}}. Moreover, $\Aut(\Gamma)$ is vertex-primitive of type SD and $\Gamma$ is not $3$-arc-transitive.
\end{theorem}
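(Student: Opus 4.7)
The strategy is to work inside the coset representation of the vertex set. After identifying the simple direct factors of $N\cong T^k$ with $T$, take $N_v$ to be the straight diagonal $D=\{(t,\ldots,t):t\in T\}$. Each vertex $u$ corresponds to a coset $nD\in N/D$ with $N$-stabilizer $N_u=nDn^{-1}$, itself a full diagonal subgroup twisted by inner automorphisms of $T$. Using the normal form $u=(n_1,\ldots,n_{k-1},1)D$ to identify $V$ with $T^{k-1}$, the $N_v$-action becomes simultaneous $T$-conjugation, so the $N_v$-orbits on $V\setminus\{v\}$ are exactly the diagonal conjugacy classes of $T^{k-1}$; by $(G,1)$-arc-transitivity, $\Gamma^+(v)$ is a $G_v$-orbit, hence a union of such classes. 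The quasiprimitivity of $G$ forces $G_v$ to permute the $k$ simple factors transitively, and $G_v/N_v$ embeds in $\Out(T)\wr S_k$.

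The central step is to show $k=2$. For an arc $v\to u$ represented as above one computes $N_v\cap N_u\cong\bigcap_{i=1}^{k-1}C_T(n_i)$, and by $(G,2)$-arc-transitivity the group $G_{v,u}$ must act transitively on $\Gamma^+(u)$. I plan to revisit Praeger's original argument in \cite{Praeger1989} with the flawed step repaired: the error lay in claiming that an element of the form $\sigma x$ fixes a second coset, which requires a commutation condition that generally fails. Tracking precisely which elements of $G_v$ simultaneously stabilize $v$ and $u$, and comparing the resulting $G_{v,u}$-orbit structure on $\Gamma^+(u)$ with the diagonal conjugacy classes on which $N_v\cap N_u$ operates, should force $k=2$. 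This is the step I expect to be hardest, because the combinatorics of simultaneous conjugacy classes in $T^{k-1}$ interacts intricately with the induced permutation of the $k$ simple factors, and one needs to handle uniformly the action by outer automorphisms of $T$ together with the wreath action.

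Once $k=2$, identify $V$ with $T$ via $(n_1,n_2)D\mapsto n_1n_2^{-1}$; the $N$-action becomes $(a,b)\cdot t=atb^{-1}$, so the arc set is determined by a $T$-conjugation-invariant subset $C\subseteq T\setminus\{1\}$ by the rule $u\to w$ iff $u^{-1}w\in C$. The stabilizer $G_v$ embeds in $D\rtimes(\Aut(T)\times\langle\sigma\rangle)$, where the diagonal $\Aut(T)$ acts on $T$ by evaluation and the factor swap $\sigma$ acts on $T$ by inversion. Transitivity of $G_v$ on $C$ combined with transitivity of $G_{v,c}$ on $cC$ for each $c\in C$ should determine $C$ uniquely, matching exactly the connection set of Construction~\ref{Construction} and yielding $\Gamma\cong\Gamma(T)$. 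For the conclusion on $\Aut(\Gamma)$, any minimal normal subgroup of $\Aut(\Gamma)$ acting in SD-fashion on $V$ is isomorphic to $T^2$ and is characteristic in $\Aut(\Gamma)$; hence $\Aut(\Gamma)$ sits inside the normalizer $T^2.(\Out(T)\times S_2)$ of this subgroup in $\Sym(V)$, which is primitive of type SD, and it contains $G$, so it is itself primitive of type SD.

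For the final assertion that $\Gamma$ is not $3$-arc-transitive, take the $2$-arc $(1,c,cc')$ with $c,c'\in C$ and compute its stabilizer in $\Aut(\Gamma)$: modulo the possible involution coming from $\sigma$, it is essentially the subgroup of $(\Aut(T))_c$ that also fixes $c'$. Exhibiting two out-neighbors $cc'c_1$ and $cc'c_2$ of $cc'$ in distinct orbits of this stabilizer then reduces to an order comparison between $|(\Aut(T))_{c,c'}|$ and $|C|$, which fails transitivity for every nonabelian simple $T$.
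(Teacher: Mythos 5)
There is a genuine and fatal gap: your ``central step'' aims at a statement that is false. You propose to show $k=2$ and then realise $\Gamma$ as a Cayley-type digraph on $T$ with connection set $C\subseteq T$ and $\Aut(\Gamma)\leqslant T^2.(\Out(T)\times\Sy_2)$. But Construction~\ref{Construction} has $k=|T|$: the digraph $\Gamma(T)$ is $\Cos(T^{|T|},D,g)$ with $g=(t_1,\dots,t_k)$ running over \emph{all} elements of $T$, so it has $|T|^{|T|-1}$ vertices and socle $T^{|T|}$, not $|T|$ vertices and socle $T^2$. Since these examples exist (Lemma~\ref{lem5}), $k=2$ cannot be forced; the correct conclusion, which the paper proves in Lemma~\ref{lem12}, is $k=|T|$ together with $\{t_1,\dots,t_k\}=T$. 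Everything in your proposal after ``Once $k=2$'' (the identification $V\cong T$, the determination of the connection set, the bound on $\Aut(\Gamma)$, and the non-$3$-arc-transitivity computation) is therefore built on a false premise and does not describe $\Gamma(T)$ at all. You also give no actual argument for the claim $k=2$ beyond the hope that repairing Praeger's flawed step will ``force'' it; repairing that step correctly leads in the opposite direction, to $k=|T|$.

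Your setup before that point is sound and close to the paper's: normalising $t_k=1$, viewing the out-neighbour $w=D(t_1,\dots,t_k)$ and using the factorization $G_v=G_{uv}G_{vw}$ from Lemma~\ref{lem19} is exactly how the paper proceeds. The paper's route from there is to show that $\langle t_1,\dots,t_k\rangle=T$, that the $t_i$ are pairwise distinct, and that $G_{uv}$ and $G_{vw}$ meet $A$ and $R$ trivially; an order count then yields $|T|\geqslant k\geqslant|G_v\cap A|\geqslant|T|$, whence $k=|T|$ and $\{t_1,\dots,t_k\}=T$. The statement about $\Aut(\Gamma)$ is then obtained not by a normalizer computation in $\Sym(V)$ for $k=2$ but by quoting \cite{BP2003} to see that $\Aut(\Gamma)$ is again primitive of type SD with the same socle $T^{|T|}$, re-applying Lemma~\ref{lem12} to it, and comparing orders with the explicit group $X$ of~\eqref{eq7}, whose action on the $|T|$ simple factors is permutation isomorphic to $\Hol(T)$. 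Non-$3$-arc-transitivity follows from Corollary~\ref{lem17} and a count of $2$-arcs ($|M_v|=|T|$ versus $|T|^2$ many $2$-arcs at $v$), not from an orbit computation on a connection set.
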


The remaining two quasiprimitive types, $\mathrm{CD}$ and $\mathrm{PA}$, both arise from product actions. For any digraph $\Sigma$ and positive integer $m$, $\Sigma^m$ denotes the direct product of $m$ copies of $\Sigma$ as in Notation~\ref{DirectProduct}.
The wreath product $\Sym(\Delta)\wr\Sy_m=\Sym(\Delta)^m\rtimes\Sy_m$ acts naturally on the set $\Delta^m$ with product action. Let $G_1$ be the stabiliser in $G$ of the first coordinate and let $H$ be the projection of $G_1$ onto $\Sym(\Delta)$. If $G$ projects onto a transitive subgroup of $\Sy_m$, then a result of Kov\'acs~\cite[(2.2)]{kovacs} asserts that up to conjugacy in $\Sym(\Delta)^m$ we may assume that $G\leqslant H\wr\Sy_m$. A reduction for 2-arc-transitive digraphs was sought in \cite[Remark 4.3]{Praeger1989} but only partial results were obtained. Our next result yields the desired reduction.

\begin{theorem}\label{thm2}
Let $H\leqslant\Sym(\Delta)$ with transitive normal subgroup $N$ and let $G\leqslant H\wr\Sy_m$ acting on $V=\Delta^m$ with product action such that $G$  projects to a transitive subgroup of $\Sy_m$ and $G$ has component $H$. Moreover, assume that $N^m\leqslant G$. If $\Gamma$ is a $(G,s)$-arc-transitive digraph with vertex set $V$ such that $s\geqslant2$, then $\Gamma\cong\Sigma^m$ for some $(H,s)$-arc-transitive digraph $\Sigma$ with vertex set $\Delta$.
\end{theorem}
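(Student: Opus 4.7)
The plan is to define $\Sigma$ on vertex set $\Delta$ with arcs given by the first-coordinate projection of the arcs of $\Gamma$: declare $x\to y$ in $\Sigma$ precisely when there exist $u,v\in V$ with $u\to v$ in $\Gamma$, $u_1=x$ and $v_1=y$. The component hypothesis supplies, for each $h\in H$, an element of $G$ fixing position $1$ of $\Sy_m$ whose first $H$-coordinate equals $h$; applying it to an arc of $\Gamma$ shows that the arc set of $\Sigma$ is $H$-invariant. Since $G$ projects transitively onto $\Sy_m$, combining an element of $G$ swapping positions $1$ and $i$ with the $H$-invariance shows that $(u_i,v_i)$ is an arc of $\Sigma$ whenever $u\to v$ in $\Gamma$, so every arc of $\Gamma$ is an arc of $\Sigma^m$.

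The substance of the proof is the reverse inclusion: if $(u_i,v_i)$ is an arc of $\Sigma$ for every $i$, then $u\to v$ in $\Gamma$. I would fix $u$ and set $\Pi(u):=\Sigma^+(u_1)\times\cdots\times\Sigma^+(u_m)$, so that $\Gamma^+(u)\subseteq\Pi(u)$, and aim for equality. The subgroup $(N^m)_u=\prod_i N_{u_i}$ lies in $G_u$ and has orbits on $V$ of product form $\prod_i N_{u_i}\cdot a_i$; therefore if $N$ is arc-transitive on $\Sigma$, a single $(N^m)_u$-orbit of any $v\in\Gamma^+(u)$ already fills $\Pi(u)$ and forces $\Gamma^+(u)=\Pi(u)$. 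It is important that we aim for the full $N$-arc-transitivity of $\Sigma$ here, rather than just $H$-arc-transitivity, because $H^m$ need not lie in $G$ while $N^m$ does by hypothesis.

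The crux of the proof is thus to deduce $N$-arc-transitivity of $\Sigma$ from the $2$-arc-transitivity of $G$ on $\Gamma$ together with $N^m\trianglelefteq G$. My approach is to analyse $N^m$-orbits on pairs and triples in $V^2$ and $V^3$, which decompose as products of $N$-orbits on $\Delta^2$ and $\Delta^3$; the quotient $G/N^m$ sits inside $(H/N)\wr\Sy_m$ and acts on these product orbits. The transitivity of $G$ on arcs and $2$-arcs of $\Gamma$ translates into transitivity conditions on the corresponding $\Sy_m$-tuples of $N$-orbits, and the point is that these two conditions together force the $G$-orbit of arcs of $\Gamma$ to be a full $m$-fold power $R^m$ of a single $N$-orbital $R$ on $\Delta^2$, so that $R$ is the arc set of $\Sigma$ and $\Sigma$ is $N$-arc-transitive. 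The same coordinate-tracking also rules out loops and reversed arcs in $\Sigma$: either would force arcs of $\Gamma$ whose coordinate-wise patterns of equalities yield two distinct $\Sy_m$-orbits on $2$-arcs of $\Gamma$, contradicting $2$-arc-transitivity.

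Once $\Gamma\cong\Sigma^m$ is established, the $(H,s)$-arc-transitivity of $\Sigma$ is inherited from the $(G,s)$-arc-transitivity of $\Gamma=\Sigma^m$ by projecting $s$-arcs of $\Gamma$ onto coordinate $1$ and using the component hypothesis to supply the $H$-elements required to map between $s$-arcs of $\Sigma$. The main obstacle is the step deducing $N$-arc-transitivity of $\Sigma$ from the $2$-arc-transitivity of $G$ on $\Gamma$: arc-transitivity alone is insufficient (which is why only partial results were obtained in the earlier literature), and it is the combination with $N^m\leqslant G$ that makes the reduction go through.
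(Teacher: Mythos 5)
Your overall architecture is sound and genuinely different from the paper's: you build $\Sigma$ by projecting the arc set of $\Gamma$ to the first coordinate and prove a two-sided inclusion between the arc sets of $\Gamma$ and $\Sigma^m$, whereas the paper straightens a single arc emanating from a diagonal vertex by conjugating with an element of $(H_\alpha)^m$ (found via the factorization $G=N^mG_{uv}$ and the transitivity on coordinates) and reads off $\Gamma^h=\Sigma^m$ directly. Your steps establishing $H$-invariance of the projected arc set, the inclusion of the arcs of $\Gamma$ among those of $\Sigma^m$, the $(N^m)_u$-orbit argument for the reverse inclusion, and the inheritance of $(H,s)$-arc-transitivity via diagonal $s$-arcs are all correct as described.

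The gap is in what you yourself identify as the crux. You assert that transitivity of $G$ on arcs and on $2$-arcs ``translates into transitivity conditions on the corresponding $\Sy_m$-tuples of $N$-orbits'' which ``force'' the arc set to be a single $N^m$-orbit of the form $R^m$, but you never say what these conditions are or why they force the collapse; this is the one step of the proof with real mathematical content, and it is exactly the paper's Lemma~\ref{lem18} and Corollary~\ref{lem17}. The paper proves it by taking $n\in N^m$ with $u^n=v$ for an arc $u\rightarrow v$, observing that $G_{u^{n^{-1}}u}=nG_{uv}n^{-1}$, and reducing the $2$-arc factorization $G_u=G_{u^{n^{-1}}u}G_{uv}$ modulo $N^m$ to get $G=N^mG_{uv}$, i.e.\ $N^m$ is arc-transitive. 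If you prefer to stay in your orbit language, the missing observation is: since $N^m$ is vertex-transitive, \emph{every} ordered pair of $N^m$-orbits on the arc set is realized by some $2$-arc (concatenate an arc of the first type ending at $v$ with an arc of the second type starting at $v$), so $G$-transitivity on $2$-arcs forces $G$ to act transitively on ordered pairs of $N^m$-arc-orbits under the diagonal action, which is impossible unless there is only one such orbit. Once this is in place, the single orbit decomposes as $O_1\times\cdots\times O_m$, and your own coordinate-transitivity/$H$-invariance argument gives $O_1=\cdots=O_m=I$, yielding the literal equality $\Gamma=\Sigma^m$ (slightly stronger than the paper's $\Gamma\cong\Sigma^m$, obtained there by a conjugating element of $(H_\alpha)^m$ that need not lie in $G$). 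Without that step written out, however, the proof does not stand on its own.
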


A quasiprimitive group of type $\mathrm{CD}$ on a set $\Omega$ is one that has a product action on $\Omega$ and the component is quasiprimitive of type $\mathrm{SD}$, while a quasiprimitive group of type $\mathrm{PA}$ on a set $\Omega$ is one that acts faithfully on some partition $\mathcal{P}$ of $\Omega$ and $G$ has a product action on $\mathcal{P}$ such that the component $H$ is an almost simple group. When $G$ is primitive of type $\mathrm{PA}$, $H$ is primitive and the partition $\mathcal{P}$ is the partition into singletons, that is, $G$ has a product action on $\Omega$. As a consequence, we have the following corollaries.

\begin{corollary}\label{cor2}
Suppose $\Gamma$ is a connected $(G,2)$-arc-transitive digraph such that $G$ is vertex-quasiprimitive of type $\mathrm{CD}$. Then there exists a nonabelian simple group $T$ and positive integer $m\geqslant2$ such that $\Gamma\cong\Gamma(T)^m$, where $\Gamma(T)$ is as obtained from \emph{Construction~\ref{Construction}}. Moreover, $\Gamma$ is not $3$-arc-transitive.
\end{corollary}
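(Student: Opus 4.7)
Since $G$ is vertex-quasiprimitive of type CD, it acts on $V=\Delta^m$ in product action with $m\geqslant2$ and component $H\leqslant\Sym(\Delta)$ that is quasiprimitive of type SD. Writing $N=\Soc(H)\cong T^k$ with $T$ nonabelian simple and $k\geqslant2$, the socle of $G$ is $N^m=T^{km}$, so in particular $N^m\leqslant G$; moreover $G$ must project transitively to $\Sy_m$ for its action on $V$ to be transitive. Thus Theorem~\ref{thm2} applies with $s=2$ and yields $\Gamma\cong\Sigma^m$ for some $(H,2)$-arc-transitive digraph $\Sigma$ on $\Delta$. Since $\Gamma$ is connected, so is $\Sigma$, and Theorem~\ref{thm1} applied to $\Sigma$ gives $\Sigma\cong\Gamma(T)$. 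Hence $\Gamma\cong\Gamma(T)^m$.

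For the last assertion, suppose for a contradiction that $A:=\Aut(\Gamma)$ acts transitively on $3$-arcs. The plan is to apply Theorem~\ref{thm2} to $A$ with $s=3$, producing $\Gamma\cong\Sigma'^m$ for some digraph $\Sigma'$ on $\Delta$ that is $(H',3)$-arc-transitive, where $H'$ is the component of $A$; then Theorem~\ref{thm1} applied to the $(H',2)$-arc-transitive $\Sigma'$ (once $H'$ has been identified as quasiprimitive of type SD) gives $\Sigma'\cong\Gamma(T')$ for some nonabelian simple $T'$, and the final clause of Theorem~\ref{thm1} is contradicted by the $3$-arc-transitivity of $H'\leqslant\Aut(\Sigma')$.

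The main obstacle is making $A$ fit the framework of Theorem~\ref{thm2}: one must exhibit $A\leqslant H'\wr\Sy_m$ in product action on $V=\Delta^m$, identify $H'$ as quasiprimitive of type SD on $\Delta$, and produce a transitive normal $N'\trianglelefteq H'$ with $N'^m\leqslant A$. The route I would follow is to observe that $\Gamma(T)$ is vertex-primitive of type SD by Theorem~\ref{thm1}, hence prime with respect to the categorical product, so the uniqueness of prime factorisation of connected digraphs forces $A$ to permute the coordinate projections of $V=\Delta^m$ and gives $A\leqslant\Aut(\Gamma(T))\wr\Sy_m$. The socle chain $T^{km}\leqslant G\leqslant A$, together with a short analysis of the normaliser of $T^k$ inside $\Aut(\Gamma(T))$, then pins down $H'$ and $N'$ and closes the argument.
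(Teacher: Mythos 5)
The first half of your argument is the intended one: check the hypotheses of Theorem~\ref{thm2} for a quasiprimitive group of type CD (taking $N=\Soc(H)\cong T^k$), obtain $\Gamma\cong\Sigma^m$ with $\Sigma$ connected and $(H,2)$-arc-transitive, and apply Theorem~\ref{thm1} to get $\Sigma\cong\Gamma(T)$. One correction: transitivity of $G$ on $V=\Delta^m$ does \emph{not} force $G$ to project onto a transitive subgroup of $\Sy_m$ (the base group alone is already transitive on $V$); what forces it is quasiprimitivity, since otherwise the subproduct of $N^m$ supported on a proper orbit of coordinates would be an intransitive normal subgroup of $G$.

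The second half has a genuine gap. Your key claim is that $\Gamma(T)$, being vertex-primitive, is prime with respect to the categorical product, so that unique prime factorisation yields $\Aut(\Gamma)\leqslant\Aut(\Gamma(T))\wr\Sy_m$. Vertex-primitivity does not imply primality: primitive groups of type PA preserve Cartesian decompositions of the point set, so a vertex-primitive digraph can perfectly well be a nontrivial direct product (your own $\Gamma\cong\Gamma(T)^m$ is vertex-primitive under $X\wr\Sy_m$ and is such a product). Moreover, unique-factorisation theorems for the categorical product of digraphs, and the accompanying description of $\Aut(\Sigma^m)$, carry hypotheses (thinness of the factors, and the like) that you do not verify and that the paper never develops; as stated this step would not survive refereeing. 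A route that stays inside the paper's toolkit: $X\wr\Sy_m\leqslant\Aut(\Gamma(T)^m)$ where $X=\Aut(\Gamma(T))$ is primitive of type SD by Theorem~\ref{thm1}, so $X\wr\Sy_m$ is primitive of type CD and hence $A:=\Aut(\Gamma)$ is primitive; the overgroup results of \cite{BP2003}, used in exactly this way in the proof of Theorem~\ref{thm3}, then force $\Soc(A)=T^{km}$ (the alternative $A\geqslant\Alt(V)$ is impossible because a $2$-transitive automorphism group contradicts antisymmetry). If $\Gamma$ were $(A,3)$-arc-transitive, Corollary~\ref{lem17} applied to the vertex-transitive normal subgroup $\Soc(A)$ would make $\Gamma$ $(\Soc(A),2)$-arc-transitive, which fails by the counting argument of Lemma~\ref{lem14}: a vertex stabiliser in $\Soc(A)$ has order $|T|^m$, while by Lemma~\ref{lem2} each vertex of $\Gamma(T)^m$ lies at the start of $|T|^{2m}$ two-arcs. (Once $A$ is pinned down as quasiprimitive of type CD, your Theorem-\ref{thm2}-with-$s=3$ plan also closes; but the identification of $A$ must be done group-theoretically, not via digraph factorisation.)
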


\begin{corollary}\label{cor3}
Suppose $\Gamma$ is a $(G,s)$-arc-transitive digraph such that $G$ is vertex-primitive of type $\mathrm{PA}$. Then $\Gamma\cong\Sigma^m$ for some $(H,s)$-arc-transitive digraph $\Sigma$ and integer $m\geqslant2$ for some almost simple primitive permutation group $H\leqslant\Aut(\Sigma)$.
\end{corollary}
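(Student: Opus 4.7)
The strategy is to verify the hypotheses of Theorem~\ref{thm2} and then apply it directly. Assume throughout that $s\geqslant 2$, since Theorem~\ref{thm2} requires this (the cases $s\leqslant 1$ impose no genuine structural constraint beyond vertex- and arc-transitivity and are not the concern of the corollary).

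By the definition of type $\mathrm{PA}$ recalled immediately before the statement, since $G$ is vertex-primitive of type $\mathrm{PA}$, the partition $\mathcal{P}$ in that definition is the partition into singletons, so $V=\Delta^m$ for some set $\Delta$ and integer $m\geqslant 2$, $G$ acts on $V$ with product action, and the component $H\leqslant\Sym(\Delta)$ is an almost simple primitive permutation group. Let $T=\Soc(H)$, a nonabelian simple group; since $H$ is almost simple and primitive, $T$ is transitive on $\Delta$. Primitivity of $G$ forces the projection of $G$ onto $\Sy_m$ to be transitive: otherwise the orbits of that projection would partition the coordinates into two nonempty blocks and, through the corresponding factorisation $\Delta^m\cong\Delta^a\times\Delta^b$, produce a nontrivial $G$-invariant partition of $V$. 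The cited result of Kov\'acs then allows us to assume, after conjugation in $\Sym(\Delta)^m$, that $G\leqslant H\wr\Sy_m$.

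It remains to exhibit a transitive normal subgroup $N$ of $H$ with $N^m\leqslant G$. Set $N=T$. Since $G$ is of type $\mathrm{PA}$ with component $H$, its socle is $T^m$, and in particular $T^m\leqslant G$, so $N^m\leqslant G$ holds. Thus every hypothesis of Theorem~\ref{thm2} is satisfied, and applying it yields $\Gamma\cong\Sigma^m$ for some $(H,s)$-arc-transitive digraph $\Sigma$ on vertex set $\Delta$. Because $\Sigma$ is $(H,s)$-arc-transitive, $H$ acts faithfully as a group of automorphisms of $\Sigma$, so $H\leqslant\Aut(\Sigma)$; and $H$ is almost simple and primitive by the definition of type $\mathrm{PA}$, completing the proof.

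In short, the corollary is essentially a dictionary-matching exercise between the O'Nan--Scott description of type $\mathrm{PA}$ and the hypotheses of Theorem~\ref{thm2}. The only step that requires any argument is the transitivity of the top-group projection, which is a short standard product-action observation; all the substantive work has already been absorbed into Theorem~\ref{thm2}, so I do not anticipate any genuine obstacle.
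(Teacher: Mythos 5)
Your proposal is correct and follows essentially the same route as the paper, which derives Corollary~\ref{cor3} directly from Theorem~\ref{thm2} by unpacking the definition of primitive type $\mathrm{PA}$ (product action on $\Delta^m$, almost simple primitive component $H$, socle $T^m\leqslant G$ with $T=\Soc(H)$ transitive on $\Delta$, and transitive top-group projection by primitivity). Your explicit remark that $s\geqslant2$ must be assumed to invoke Theorem~\ref{thm2} is a fair reading of an implicit hypothesis in the corollary's statement, not a gap in your argument.
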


We give an example in Section~\ref{sec2} of an infinite family of $(G,2)$-arc-transitive digraphs $\Gamma$ with $G$ vertex-quasiprimitive of PA type such that $\Gamma$ is not a direct power of a digraph $\Sigma$ (indeed the number of vertices of $\Gamma$ is not a proper power). We leave the investigation of such digraphs open.

We note that Theorem \ref{thm1} and Corollaries \ref{cor2} and \ref{cor3}, reduce Question~\ref{qes1} to studying almost simple primitive groups.

\begin{corollary}\label{cor1}
There exists an absolute upper bound $C$ such that every vertex-primitive $s$-arc-transitive digraph that is not a directed cycle satisfies $s\leqslant C$, if and only if for every $(G,s)$-arc-transitive digraph with $G$ a primitive almost simple group we have $s\leqslant C$.
\end{corollary}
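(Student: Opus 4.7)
The plan is to prove this biconditional by a case analysis based on the O'Nan-Scott classification. The forward implication is essentially formal: if $\Gamma$ is $(G,s)$-arc-transitive with $G$ primitive almost simple, then $\Aut(\Gamma)\geqslant G$ is also primitive on vertices, and $\Gamma$ cannot be a directed cycle since the automorphism group of a directed cycle is cyclic and in particular not almost simple. Thus a universal bound in the vertex-primitive, not-directed-cycle setting automatically specialises to the same bound in the primitive almost simple setting.

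For the reverse direction, I would take a vertex-primitive $s$-arc-transitive digraph $\Gamma$ that is not a directed cycle, set $G=\Aut(\Gamma)$, and assume $s\geqslant2$ (else the inequality is trivial). The group $G$ is primitive on the vertex set, so I would partition the argument according to its O'Nan-Scott type. In the four types HA, HS, HC and TW, the group $G$ has a minimal normal subgroup acting regularly on vertices; by \cite[Theorem~3.1]{Praeger1989} this would force $\Gamma$ to be a directed cycle, contradicting the hypothesis, so these types do not arise. Type SD is handled by Theorem~\ref{thm1}, which shows that $\Gamma\cong\Gamma(T)$ is not $3$-arc-transitive, hence $s\leqslant2$. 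Type CD is handled identically by Corollary~\ref{cor2}. In type PA, Corollary~\ref{cor3} writes $\Gamma\cong\Sigma^m$ for some $(H,s)$-arc-transitive digraph $\Sigma$ with $H$ almost simple primitive, and the hypothesis applied to the pair $(\Sigma,H)$ yields $s\leqslant C$. In the almost simple type AS, the hypothesis applies directly to $(\Gamma,G)$.

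To absorb the bounds $s\leqslant2$ coming from the SD and CD cases into $s\leqslant C$, I would observe that the assumed bound $C$ must already satisfy $C\geqslant2$: the primitive almost simple $2$-arc-transitive digraphs constructed in \cite{GLX} witness $s=2$ in the almost simple setting, so under the hypothesis $C\geqslant2$ is automatic.

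There is no real obstacle here, since each of the substantive cases has been settled by the earlier results of this paper, together with Praeger's reduction for groups with a regular normal subgroup. The only thing one has to be mildly careful about is the book-keeping just described, namely that the hypothesis silently forces $C\geqslant2$, and therefore the crude bound $s\leqslant2$ obtained in types SD and CD is indeed subsumed by $s\leqslant C$.
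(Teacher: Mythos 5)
Your proposal is correct and matches the paper's intended argument: the paper gives no separate proof of this corollary, simply noting that it follows from Theorem~\ref{thm1}, Corollaries~\ref{cor2} and~\ref{cor3}, and Praeger's result (Proposition~\ref{prop1}) via exactly the O'Nan--Scott case analysis you describe. Your additional book-keeping observations (connectedness from vertex-primitivity, the forward direction being formal, and $C\geqslant2$ being forced so that the SD/CD bound $s\leqslant 2$ is subsumed) are all sound.
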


Theorem~\ref{thm1} follows immediately from a more general theorem (Theorem~\ref{thm3}) given at the end of Section~3. Then in Section~4, we prove Theorem~\ref{thm2} as well as Corollaries~\ref{cor2}--\ref{cor3} after establishing some general results for normal subgroups of $s$-arc-transitive groups.

\section{Preliminaries}\label{sec1}

We say that a digraph $\Gamma$ is \emph{$k$-regular} if both the set $\Gamma^-(v)=\{u\in V\mid u\rightarrow v\}$ of in-neighbours of $v$ and the set $\Gamma^+(v)=\{w\in V\mid v\rightarrow w\}$ of out-neighbours of $v$ have size $k$ for all $v\in V$, and we say that $\Gamma$ is  \emph{regular} if it is $k$-regular for some positive integer $k$. Note that any vertex-transitive digraph is regular. Moreover, if $\Gamma$ is regular and $(G,s)$-arc-transitive with $s\geqslant2$ then it is also $(G,s-1)$-arc-transitive.

Recall that a digraph is said to be connected if and only if its underlying graph is connected. A vertex-primitive digraph is necessarily connected, for otherwise its connected components would form a partition of the vertex set that is invariant under digraph automorphisms.

\subsection{Group factorizations}

All the groups we consider in this paper are assumed to be finite. An expression of a group $G$ as the product of two subgroups $H$ and $K$ of $G$ is called a \emph{factorization} of $G$. The following lemma lists several equivalent conditions for a group factorization, whose proof is fairly easy and so is omitted.

\begin{lemma}\label{Factorization}
Let $H$ and $K$ be subgroups of $G$. Then the following are equivalent.
\begin{itemize}
\item[(a)] $G=HK$.
\item[(b)] $G=KH$.
\item[(c)] $G=(x^{-1}Hx)(y^{-1}Ky)$ for any $x,y\in G$.
\item[(d)] $|H\cap K||G|=|H||K|$.
\item[(e)] $H$ acts transitively on the set of right cosets of $K$ in $G$ by right multiplication.
\item[(f)] $K$ acts transitively on the set of right cosets of $H$ in $G$ by right multiplication.
\end{itemize}
\end{lemma}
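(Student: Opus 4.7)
The plan is to establish the equivalences by choosing a small ``hub'' and showing each of the other conditions is equivalent to it. I would take condition (a), $G = HK$, as the hub and prove (a)$\Leftrightarrow$(b), (a)$\Leftrightarrow$(d), (a)$\Leftrightarrow$(c), (a)$\Leftrightarrow$(f), and (b)$\Leftrightarrow$(e).

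First, the equivalence (a)$\Leftrightarrow$(b) is immediate from inversion: $(HK)^{-1} = K^{-1}H^{-1} = KH$, and $G^{-1} = G$, so $G = HK$ if and only if $G = KH$. Next, (a)$\Leftrightarrow$(d) follows from the standard orbit-counting fact that the map $H \times K \to HK$, $(h,k) \mapsto hk$, has fibres in bijection with $H \cap K$ (since $hk = h'k'$ iff $h^{-1}h' = k(k')^{-1} \in H \cap K$), giving $|HK| = |H||K|/|H \cap K|$; then $G = HK$ is the same as $|HK| = |G|$, i.e., $|H \cap K||G| = |H||K|$.

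For (a)$\Leftrightarrow$(c), one direction is trivial by taking $x = y = 1$. For the other, assuming $G = HK$, the key observation is that every $(H,K)$-double coset $HzK$ already equals $G$: writing $z = hk$ with $h \in H$, $k \in K$ gives $HzK = H(hk)K = HK = G$. Applying this with $z = xy^{-1}$, we get $(x^{-1}Hx)(y^{-1}Ky) = x^{-1}(HzK)y = x^{-1}Gy = G$. Finally, (a)$\Leftrightarrow$(f) and (b)$\Leftrightarrow$(e) come from unpacking transitivity: the orbit of the trivial coset $H$ under right multiplication by $K$ is $\{Hk : k \in K\}$, whose union is $HK$; this orbit is all of $H \backslash G$ precisely when $HK = G$, giving (a)$\Leftrightarrow$(f), and symmetrically (b)$\Leftrightarrow$(e). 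Combined with (a)$\Leftrightarrow$(b) this closes the cycle.

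Essentially none of the steps is a genuine obstacle, which is why the authors omit the proof. The only point deserving a moment's care is (a)$\Rightarrow$(c), where a naive attempt via (d) would require comparing $|x^{-1}Hx \cap y^{-1}Ky|$ with $|H \cap K|$ — a comparison that need not hold for a single intersection of conjugates. The clean route, as above, is to bypass (d) and use the double-coset identity $HzK = G$ for all $z \in G$; once one sees this identity, the rest of the argument is bookkeeping.
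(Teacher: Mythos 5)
Your proof is correct; the paper explicitly omits a proof of this lemma (``whose proof is fairly easy and so is omitted''), so there is nothing to compare against, and your argument is the standard one: the inversion trick for (a)$\Leftrightarrow$(b), the product formula $|HK|=|H||K|/|H\cap K|$ for (d), the double-coset identity $HzK=G$ for (c), and the orbit description for (e) and (f) all check out. Your closing caveat about the naive route to (c) via (d) is slightly overstated --- under the hypothesis $G=HK$ the equality $|x^{-1}Hx\cap y^{-1}Ky|=|H\cap K|$ does in fact hold, but only as a consequence of (c) itself, so your direct double-coset argument is indeed the right way to avoid circularity.
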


The $s$-arc-transitivity of digraphs can be characterized by group factorizations as follows:

\begin{lemma}\label{lem19}
Let $\Gamma$ be a $G$-arc-transitive digraph, $s\geqslant2$ be an integer, and $v_0\rightarrow v_1\rightarrow\dots\rightarrow v_{s-1}\rightarrow v_s$ be an $s$-arc of $\Gamma$. Then $\Gamma$ is $(G,s)$-arc-transitive if and only if $G_{v_1\dots v_i}=G_{v_0v_1\dots v_i}G_{v_1\dots v_iv_{i+1}}$ for each $i$ in $\{1,\dots,s-1\}$.
\end{lemma}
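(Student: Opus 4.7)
The plan is to induct on $s$, using the equivalence between a group factorization and transitivity on coset spaces provided by Lemma~\ref{Factorization}(e), together with the standard orbit-stabilizer identification of out-neighbour sets with coset spaces.

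For the base case $s=2$, I would first observe that, given $G$-arc-transitivity, $(G,2)$-arc-transitivity is equivalent to $G_{v_0 v_1}$ acting transitively on $\Gamma^+(v_1)$: for the forward direction, two $2$-arcs $(v_0,v_1,w)$ and $(v_0,v_1,w')$ are swapped by some $g\in G$ which necessarily fixes $v_0$ and $v_1$; the backward direction follows by first using $G$-arc-transitivity to align initial arcs and then applying the stabilizer transitivity. Because $G_{v_1}$ acts transitively on $\Gamma^+(v_1)$ with point-stabilizer $G_{v_1 v_2}$, the orbit-stabilizer theorem supplies a $G_{v_1}$-equivariant bijection $\Gamma^+(v_1)\leftrightarrow G_{v_1}/G_{v_1 v_2}$. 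Transitivity of $G_{v_0 v_1}$ on $\Gamma^+(v_1)$ thus coincides with transitivity on the cosets, and Lemma~\ref{Factorization}(e) converts this to the factorization $G_{v_1}=G_{v_0 v_1}G_{v_1 v_2}$.

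For the inductive step with $s\geqslant 3$, assume the lemma holds with $s-1$ in place of $s$. Since vertex-transitivity implies regularity, $(G,s)$-arc-transitivity implies $(G,s-1)$-arc-transitivity, and a direct argument using the preceding observation shows that $(G,s)$-arc-transitivity is equivalent to $(G,s-1)$-arc-transitivity together with $G_{v_0\dots v_{s-1}}$ acting transitively on $\Gamma^+(v_{s-1})$. By the inductive hypothesis, the first of these conditions is equivalent to the stated factorizations for $i\in\{1,\dots,s-2\}$. Now, granted $(G,s-1)$-arc-transitivity, $G_{v_1\dots v_{s-1}}$ acts transitively on $\Gamma^+(v_{s-1})$ with stabilizer of $v_s$ equal to $G_{v_1\dots v_s}$, yielding the bijection $\Gamma^+(v_{s-1})\leftrightarrow G_{v_1\dots v_{s-1}}/G_{v_1\dots v_s}$. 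Exactly as in the base case, $G_{v_0\dots v_{s-1}}$ is transitive on $\Gamma^+(v_{s-1})$ if and only if it is transitive on these cosets, if and only if $G_{v_1\dots v_{s-1}}=G_{v_0\dots v_{s-1}}G_{v_1\dots v_s}$, supplying the factorization for $i=s-1$ and closing the induction.

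The argument is essentially routine; there is no serious obstacle. The only step requiring a little care is the reduction of $(G,s)$-arc-transitivity to $(G,s-1)$-arc-transitivity plus transitivity on the out-neighbour set of $v_{s-1}$, which relies on using vertex-transitivity to guarantee that every $(s-1)$-arc extends forward to an $s$-arc. Once that is verified, the translation between coset-transitivity and group factorization is a direct application of Lemma~\ref{Factorization}.
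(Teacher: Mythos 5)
Your proof is correct and follows essentially the same route as the paper: both reduce $(G,s)$-arc-transitivity to $(G,s-1)$-arc-transitivity plus transitivity of $G_{v_0\dots v_i}$ on $\Gamma^+(v_i)$, and both convert that transitivity into the factorization $G_{v_1\dots v_i}=G_{v_0\dots v_i}G_{v_1\dots v_{i+1}}$ (the paper cites Frattini's argument where you spell out the orbit--stabilizer identification and Lemma~\ref{Factorization}(e)), finishing by induction.
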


\begin{proof}
For any $i$ such that $1\leqslant i\leqslant s-1$, the group $G_{v_1\dots v_i}$ acts on the set $\Gamma^+(v_i)$ of out-neighbours of $v_i$. Since $v_{i+1}\in\Gamma^+(v_i)$ and $G_{v_1\dots v_iv_{i+1}}$ is the stabilizer in $G_{v_1\dots v_i}$ of $v_{i+1}$, by Frattini's argument, the subgroup $G_{v_0v_1\dots v_i}$ of $G_{v_1\dots v_i}$ is transitive on $\Gamma_+(v_i)$ if and only if $G_{v_1\dots v_i}=G_{v_0v_1\dots v_i}G_{v_1\dots v_iv_{i+1}}$. Note that $\Gamma$ is $(G,s)$-arc-transitive if and only if $\Gamma$ is $(G,s-1)$-arc-transitive and $G_{v_0v_1\dots v_i}$ is transitive on $\Gamma_+(v_i)$. One then deduces inductively that $\Gamma$ is $(G,s)$-arc-transitive if and only if $G_{v_1\dots v_i}=G_{v_0v_1\dots v_i}G_{v_1\dots v_iv_{i+1}}$ for each $i$ in $\{1,\dots,s-1\}$.
\end{proof}

If $\Gamma$ is a $G$-arc-transitive digraph and $u\rightarrow v$ is an arc of $\Gamma$, then since $G$ is vertex-transitive we can write $v=u^g$ for some $g\in G$ and it follows that
\begin{equation}\label{eq9}
v^{g^{-1}}\rightarrow v\rightarrow\dots\rightarrow v^{g^{s-2}}\rightarrow v^{g^{s-1}}
\end{equation}
is an $s$-arc of $\Gamma$. In this setting, Lemma~\ref{lem19} is reformulated as follows.

\begin{lemma}\label{lem16}
Let $\Gamma$ be a $G$-arc-transitive digraph, $s\geqslant2$ be an integer, $v$ be a vertex of $\Gamma$, and $g\in G$ such that $v\rightarrow v^g$. Then $\Gamma$ is $(G,s)$-arc-transitive if and only if
$$
\bigcap_{j=0}^{i-1}g^{-j}G_vg^j=\left(\bigcap_{j=0}^ig^{-(j-1)}G_vg^{j-1}\right)\left(\bigcap_{j=0}^ig^{-j}G_vg^j\right)
$$
for each $i$ in $\{1,\dots,s-1\}$.
\end{lemma}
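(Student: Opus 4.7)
The approach is to specialise Lemma~\ref{lem19} to the $s$-arc displayed in~\eqref{eq9} and to translate each vertex stabiliser appearing there into the corresponding intersection of conjugates of $G_v$.

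First I would verify that \eqref{eq9} really is an $s$-arc. Since $v\to v^g$ and automorphisms preserve the arc relation, applying the automorphism $g^{i-1}$ yields $v^{g^{i-1}}\to v^{g^i}$ for every integer $i$, so setting $v_i:=v^{g^{i-1}}$ for $i=0,1,\dots,s$ produces an $s$-arc $v_0\to v_1\to\cdots\to v_s$ of $\Gamma$ with $v_1=v$.

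The next step is the elementary observation that an element $h\in G$ fixes $v^{g^k}$ if and only if $g^khg^{-k}\in G_v$, equivalently $h\in g^{-k}G_vg^k$. Consequently the stabiliser in $G$ of any finite set of vertices $\{v^{g^{k_1}},\dots,v^{g^{k_r}}\}$ equals $\bigcap_{t=1}^r g^{-k_t}G_vg^{k_t}$. I would apply this to the three stabilisers appearing in Lemma~\ref{lem19}: for $G_{v_1\cdots v_i}$ the relevant exponents are $0,1,\dots,i-1$, giving $\bigcap_{j=0}^{i-1}g^{-j}G_vg^j$; for $G_{v_0v_1\cdots v_i}$ the exponents are $-1,0,\dots,i-1$, and reindexing them as $j-1$ with $j$ running from $0$ to $i$ gives $\bigcap_{j=0}^{i}g^{-(j-1)}G_vg^{j-1}$; and for $G_{v_1\cdots v_{i+1}}$ the exponents are $0,1,\dots,i$, giving $\bigcap_{j=0}^{i}g^{-j}G_vg^j$.

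Substituting these three expressions into the factorisation criterion $G_{v_1\cdots v_i}=G_{v_0v_1\cdots v_i}G_{v_1\cdots v_{i+1}}$ of Lemma~\ref{lem19} yields exactly the identity in the statement, completing the proof. No genuine obstacle arises; the argument is pure bookkeeping, and the only care needed is the reindexing that produces the shifted conjugates $g^{-(j-1)}G_vg^{j-1}$ in the middle intersection, matching the asymmetric form displayed in the statement.
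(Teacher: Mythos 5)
Your proposal is correct and follows essentially the same route as the paper: set $v_j=v^{g^{j-1}}$, observe that the stabiliser of $v^{g^k}$ is $g^{-k}G_vg^k$, rewrite the three multi-point stabilisers as the stated intersections, and substitute into the factorisation criterion of Lemma~\ref{lem19}. The bookkeeping, including the reindexing that produces the shifted middle intersection, matches the paper's computation exactly.
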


\begin{proof}
Let $v_j=v^{g^{j-1}}$ for any integer $j$ such that $0\leqslant j\leqslant s-1$. Then the $s$-arc~\eqref{eq9} of $\Gamma$ turns out to be $v_0\rightarrow v_1\rightarrow\dots\rightarrow v_{s-1}\rightarrow v_s$, and for any $i$ in $\{1,\dots,s\}$ we have
$$
G_{v_1\dots v_i}=\bigcap_{j=1}^iG_{v_j}=\bigcap_{j=1}^ig^{-(j-1)}G_vg^{j-1}=\bigcap_{j=0}^{i-1}g^{-j}G_vg^j
$$
and
$$
G_{v_0v_1\dots v_i}=\bigcap_{j=0}^iG_{v_j}=\bigcap_{j=0}^ig^{-(j-1)}G_vg^{j-1}.
$$
Hence the conclusion of the lemma follows from Lemma~\ref{lem19}.
\end{proof}

\subsection{Constructions of $s$-arc-transitive digraphs}

Let $G$ be a group, $H$ be a subgroup of $G$, $V$ be the set of right cosets of $H$ in $G$ and $g$ be an element of $G\setminus H$ such that $g^{-1}\notin HgH$. Define a binary relation $\rightarrow$ on $V$ by letting $Hx\rightarrow Hy$ if and only if $yx^{-1}\in HgH$ for any $x,y\in G$. Then $(V,\rightarrow)$ is a digraph, denoted by $\Cos(G,H,g)$. Right multiplication gives an action $R_H$ of $G$ on $V$ that preserves the relation $\rightarrow$, so that $R_H(G)$ is a group of automorphisms of $\Cos(G,H,g)$.

\begin{lemma}\label{CosetDigraph}
In the above notation, the following hold.
\begin{itemize}
\item[(a)] $\Cos(G,H,g)$ is $|H{:}H\cap g^{-1}Hg|$-regular.
\item[(b)] $\Cos(G,H,g)$ is $R_H(G)$-arc-transitive.
\item[(c)] $\Cos(G,H,g)$ is connected if and only if $\langle H,g\rangle=G$.
\item[(d)] $\Cos(G,H,g)$ is $R_H(G)$-vertex-primitive if and only if $H$ is maximal in $G$.
\item[(e)] Let $s\geqslant2$ be an integer. Then $\Cos(G,H,g)$ is $(R_H(G),s)$-arc-transitive if and only if for each $i$ in $\{1,\dots,s-1\}$,
$$
\bigcap_{j=0}^{i-1}g^{-j}Hg^j=\left(\bigcap_{j=0}^ig^{-(j-1)}Hg^{j-1}\right)\left(\bigcap_{j=0}^ig^{-j}Hg^j\right).
$$
\end{itemize}
\end{lemma}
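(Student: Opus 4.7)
The plan is to treat each part by starting from the distinguished vertex $H\in V$ and transferring information to arbitrary vertices via the transitive action $R_H$, so the main technical work is a single coset calculation at the vertex $H$.

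For (a), the out-neighbours of $H$ are precisely those cosets $Hy$ with $y\in HgH$, and writing $y=h_1gh_2$ identifies this set with $\{Hgh:h\in H\}$. Two such cosets $Hgh_1$ and $Hgh_2$ coincide iff $h_2h_1^{-1}\in H\cap g^{-1}Hg$, so the out-degree of $H$ equals $|H{:}H\cap g^{-1}Hg|$. An analogous computation using $Hx\to H\iff x\in Hg^{-1}H$, together with the index identity $|H{:}H\cap gHg^{-1}|=|H{:}H\cap g^{-1}Hg|$ obtained by conjugating by $g$, gives the same in-degree, and vertex-transitivity of $R_H(G)$ propagates this to every vertex. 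For (b), the stabiliser of $H$ in $R_H(G)$ is $H$ itself, acting on the out-neighbourhood $\{Hgh:h\in H\}$ of $H$ transitively by right multiplication; combined with vertex-transitivity this yields arc-transitivity.

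For (c), I would check that the vertices reachable from $H$ in the underlying graph are exactly those cosets $Hx$ with $x\in\langle H,g\rangle$: reachability is closed under left-multiplying $x$ by elements of $H$ and by $g^{\pm1}$, and conversely the set $\{Hx:x\in\langle H,g\rangle\}$ is closed under the arc relation in both directions since $HgH\cup Hg^{-1}H\subseteq\langle H,g\rangle$. Hence connectedness is equivalent to $\langle H,g\rangle=G$. Part (d) is the classical fact that the action of a group on the cosets of a subgroup is primitive iff the subgroup is maximal, applied to $R_H(G)$ on $V$. Finally, (e) follows directly from Lemma~\ref{lem16}: by (b), $\Cos(G,H,g)$ is $R_H(G)$-arc-transitive; taking $v=H$ we have $v^g=Hg$ with $H\to Hg$ because $g\in HgH$, and the stabiliser in $R_H(G)$ of the identity coset is $H$, so substituting $G_v=H$ into Lemma~\ref{lem16} yields exactly the intersection-factorisation condition in (e).

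I do not expect a serious obstacle here; the main point of care is (a), where one has to notice the index identity making in- and out-degrees agree, and (c), where one must verify that the component containing $H$ is \emph{exactly} the image of $\langle H,g\rangle$ rather than something larger produced by unexpected alternating walks. Both reduce to the observation that the arc relation in $\Cos(G,H,g)$ transforms in the obvious way under right multiplication by $G$.
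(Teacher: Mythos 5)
Your proposal is correct and matches the paper's treatment: the paper simply cites parts (a)--(d) as folklore (referring to \cite{CLP1995}) and derives part (e) from Lemma~\ref{lem16} exactly as you do, taking $v=H$, $v^g=Hg$ and $G_v=H$. Your explicit coset computations for (a)--(d) are just the standard verifications that the folklore reference encapsulates, so there is nothing substantively different here.
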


\begin{proof}
Parts~(a)--(d) are folklore (see for example~\cite{CLP1995}), and part~(e) is derived in light of Lemma~\ref{lem16}.
\end{proof}

\begin{remark}\label{rem1}
Lemma~\ref{CosetDigraph} establishes a group theoretic approach to constructing $s$-arc-transitive digraphs. In particular, $\Cos(G,H,g)$ is $(R_H(G),2)$-arc-transitive if and only if $H=(gHg^{-1}\cap H)(H\cap g^{-1}Hg)$.
\end{remark}

Next we show how to construct $s$-arc-transitive digraphs from existing ones. Let $\Gamma$ be a digraph with vertex set $U$ and $\Sigma$ be a digraph with vertex set $V$. The \emph{direct product} of $\Gamma$ and $\Sigma$, denoted $\Gamma\times\Sigma$, is the digraph (it is easy to verify that this is indeed a digraph) with vertex set $U\times V$ and $(u_1,v_1)\rightarrow(u_2,v_2)$ if and only if $u_1\rightarrow u_2$ and $v_1\rightarrow v_2$, where $u_i\in U$ and $v_i\in V$ for $i=1,2$.

\begin{notation}\label{DirectProduct}
For any digraph $\Sigma$ and positive integer $m$, denote by $\Sigma^m$ the direct product of $m$ copies of $\Sigma$.
\end{notation}


\begin{lemma}\label{lem15}
Let $s$ be a positive integer, $\Gamma$ be a $(G,s)$-arc-transitive digraph and $\Sigma$ be a $(H,s)$-arc-transitive digraph. Then $\Gamma\times\Sigma$ is a $(G\times H,s)$-arc-transitive digraph, where $G\times H$ acts on the vertex set of $\Gamma\times\Sigma$ by product action.
\end{lemma}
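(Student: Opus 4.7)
The plan is to show transitivity of $G\times H$ on the set of $s$-arcs of $\Gamma\times\Sigma$ by identifying the $s$-arcs of the product digraph with pairs of $s$-arcs, one from each factor, in a way compatible with the product action.

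First I would unfold the definition of the direct product: a pair $((u_1,v_1),(u_2,v_2))$ is an arc of $\Gamma\times\Sigma$ if and only if $u_1\rightarrow u_2$ in $\Gamma$ and $v_1\rightarrow v_2$ in $\Sigma$. Applying this condition to each consecutive pair along a sequence of $s+1$ vertices shows that $(u_0,v_0),(u_1,v_1),\dots,(u_s,v_s)$ is an $s$-arc of $\Gamma\times\Sigma$ if and only if $u_0,u_1,\dots,u_s$ is an $s$-arc of $\Gamma$ and $v_0,v_1,\dots,v_s$ is an $s$-arc of $\Sigma$. This yields a natural bijection between the $s$-arcs of $\Gamma\times\Sigma$ and the Cartesian product of the set of $s$-arcs of $\Gamma$ with the set of $s$-arcs of $\Sigma$.

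Next I would verify that $G\times H$ acts as a group of automorphisms of $\Gamma\times\Sigma$ via the product action $(u,v)^{(g,h)}=(u^g,v^h)$. This is immediate since if $(u_1,v_1)\rightarrow(u_2,v_2)$ then $u_1\rightarrow u_2$ forces $u_1^g\rightarrow u_2^g$ by $G\leqslant\Aut(\Gamma)$, and similarly in the second coordinate. Under the bijection established above, the induced action of $G\times H$ on $s$-arcs of $\Gamma\times\Sigma$ is precisely the coordinatewise action of $(g,h)$ on pairs consisting of an $s$-arc of $\Gamma$ and an $s$-arc of $\Sigma$.

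Finally, transitivity follows by a direct argument: given two $s$-arcs of $\Gamma\times\Sigma$ corresponding under the bijection to pairs $(\alpha,\beta)$ and $(\alpha',\beta')$, use $(G,s)$-arc-transitivity of $\Gamma$ to pick $g\in G$ sending $\alpha$ to $\alpha'$ and $(H,s)$-arc-transitivity of $\Sigma$ to pick $h\in H$ sending $\beta$ to $\beta'$; then $(g,h)\in G\times H$ sends the first $s$-arc to the second. I expect no serious obstacle here, as the lemma is essentially a formal consequence of the definitions of direct product and product action; the only thing requiring care is the inductive unfolding of the arc relation on the product, which is routine.
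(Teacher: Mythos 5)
Your proposal is correct and follows essentially the same route as the paper's own proof: decompose an $s$-arc of $\Gamma\times\Sigma$ into an $s$-arc of $\Gamma$ and an $s$-arc of $\Sigma$, then apply the transitivity hypotheses coordinatewise to produce the required element $(g,h)\in G\times H$. Your extra remarks on the bijection with pairs of $s$-arcs and on $G\times H$ acting as automorphisms are harmless elaborations of what the paper leaves implicit.
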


\begin{proof}
Let $(u_0,v_0)\rightarrow(u_1,v_1)\rightarrow\dots\rightarrow(u_s,v_s)$ and $(u'_0,v'_0)\rightarrow(u'_1,v'_1)\rightarrow\dots\rightarrow(u'_s,v'_s)$ be any two $s$-arcs of $\Gamma\times\Sigma$. Then $u_0\rightarrow u_1\rightarrow\dots\rightarrow u_s$ and $u'_0\rightarrow u'_1\rightarrow\dots\rightarrow u'_s$ are $s$-arcs of $\Gamma$ while $v_0\rightarrow v_1\rightarrow\dots\rightarrow v_s$ and $v'_0\rightarrow v'_1\rightarrow\dots\rightarrow v'_s$ are $s$-arcs of $\Sigma$. Since $\Gamma$ is $(G,s)$-arc-transitive, there exists $g\in G$ such that $u_i^g=u'_i$ for each $i$ with $0\leqslant i\leqslant s$. Similarly, there exists $h\in H$ such that $v_i^h=v'_i$ for each $i$ with $0\leqslant i\leqslant s$. It follows that $(u_i,v_i)^{(g,h)}=(u'_i,v'_i)$ for each $i$ with $0\leqslant i\leqslant s$. This means that $\Gamma\times\Sigma$ is a $(G\times H,s)$-arc-transitive.
\end{proof}

\subsection{Example}\label{sec2}

In this subsection we give an example of an infinite family of $(G,2)$-arc-transitive digraphs $\Gamma$ with $G$ vertex-quasiprimitive of PA type such that $\Gamma$ is not a direct power of a digraph $\Sigma$. In fact, we prove in Lemma~\ref{lem20} that the number of vertices of $\Gamma$ is not a proper power.

Let $n\geqslant5$ be odd, $G_1=\Alt(\{1,2,\dots,n\})$ and $G_2=\Alt(\{n+1,n+2,\dots,2n\})$. Take permutations
$$
a=(1,n+1)(2,n+2)\cdots(n,2n),\quad b=(1,2)(3,4)(n+1,n+2)(n+3,n+4)
$$
and
\begin{align*}
g=(&1,n+2,2,n+3,5,n+6,7,n+8,\dots,2i-1,n+2i,\dots,n-2,2n-1,n,\\
&n+1,3,n+4,4,n+5,6,n+7,\dots,2j,n+2j+1,\dots,n-1,2n).
\end{align*}
In fact, $g=ac$ with
\[
c=(1,3,5,6,7,\dots,n)(n+1,n+2,\dots,2n).
\]
Let $G=(G_1\times G_2)\rtimes\langle a\rangle$, and note that $g\in G$ as $c\in G_1\times G_2$. Let $H=\langle a,b\rangle=\langle a\rangle\times\langle b\rangle$ and $\Gamma_n=\Cos(G,H,g)$.

\begin{lemma}
For all odd $n\geqslant5$, $\Gamma_n$ is a connected $(G,2)$-arc-transitive digraph with $G$ quasiprimitive of PA type on the vertex set.
\end{lemma}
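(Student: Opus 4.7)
The plan is to verify, in order, (i) $\Gamma_n$ is a well-defined digraph, (ii) $\Gamma_n$ is connected, (iii) $\Gamma_n$ is $(G,2)$-arc-transitive, and (iv) $G$ is quasiprimitive of PA type on the vertex set. Steps (i) and (ii) rest on Lemma~\ref{CosetDigraph}(c) together with direct parity and cycle arguments; step (iii) uses Remark~\ref{rem1}; step (iv) is structural, via the normal subgroup lattice of $G$.

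For (i) and (ii): since $n$ is odd, $a$ is an odd permutation while $b$ and $c$ are even, so the odd elements of $H$ are precisely $\{a,ab\}$, neither equal to $g=ac$, whence $g\notin H$. For $g^{-1}\notin HgH$, I enumerate the sixteen products $hgh'$ with $h,h'\in H$ and check by inspection of cycle structure that none coincides with $g^{-1}=c^{-1}a$. For connectedness, Lemma~\ref{CosetDigraph}(c) reduces the task to proving $\langle a,b,c\rangle=G$. Since $[G:N]=2$ and $a\notin N=G_1\times G_2$, it suffices to show $N\subseteq\langle a,b,c\rangle$. Writing $c=c_1c_2$ with $c_i\in G_i$, the $a$-conjugate $aca^{-1}$ lies in $N$ and swaps the roles of the two halves of size $n$; combining this with $c$ and $b$, one obtains enough explicit elements of $N$ that standard generation results for $\Alt(n)$ applied within each half yield $G_1,G_2\subseteq\langle a,b,c\rangle$.

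For (iii), I invoke Remark~\ref{rem1} and verify the factorisation $H=(gHg^{-1}\cap H)(H\cap g^{-1}Hg)$. Setting $A=gHg^{-1}\cap H$ and $B=H\cap g^{-1}Hg$, the bijection $h\mapsto g^{-1}hg$ shows $|A|=|B|$; since $|H|=4$, the factorisation amounts to producing two distinct subgroups of order $2$. The cycle $c$ is designed precisely so that direct calculation of $gag^{-1}$ and $g^{-1}ag$ (or equivalently of $gbg^{-1}$ and $g^{-1}bg$) yields specific non-identity elements of $H$ that generate $A$ and $B$, and these turn out to be distinct order-$2$ subgroups with $AB=H$. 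This is the most delicate step: the required factorisation is a finely tuned coincidence engineered by the explicit form of $c$, and confirming it demands careful explicit conjugations in $\Sym(2n)$.

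For (iv), the subgroup $N=G_1\times G_2\cong\Alt(n)^2$ is $a$-invariant; because $a$ swaps its two simple factors, the only $a$-invariant normal subgroups of $N$ are $1$ and $N$, so $N$ is the unique minimal normal subgroup of $G$. Combined with $G/N\cong\Z_2$ and $C_G(N)=1$, this forces the normal subgroups of $G$ to be exactly $\{1,N,G\}$. Since $a\in H\setminus N$ we have $NH=G$, hence $N$ is transitive on the vertex set and $G$ is quasiprimitive. For the PA structure, set $b_1=(1,2)(3,4)\in G_1$, $b_2=(n+1,n+2)(n+3,n+4)\in G_2$ so that $b=b_1b_2$, and let $L=\langle a,b_1,b_2\rangle$ of order $8$, which contains $H$. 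The $L$-cosets form a $G$-invariant partition $\mathcal{P}$ of the vertex set into blocks of size $[L:H]=2$; on $\mathcal{P}$ the subgroup $N$ acts via the product action on $(G_1/\langle b_1\rangle)\times(G_2/\langle b_2\rangle)$, with $a$ interchanging the two coordinates. The core of $L$ in $G$ is a normal subgroup of $G$ of order at most $|L|=8<|N|$, hence trivial; therefore $G$ acts faithfully on $\mathcal{P}$ via a product action whose component is the almost simple group $\Alt(n)$, which is precisely the quasiprimitive PA condition.
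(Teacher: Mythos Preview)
Your overall strategy matches the paper's, but steps (ii) and (iii) contain genuine gaps, and step (i) takes a detour that the paper avoids.

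For connectedness, you aim to show $G_1,G_2\subseteq\langle a,b,c\rangle$ directly. The elements you name ($c$, $aca^{-1}$, $b$) all lie in $N=G_1\times G_2$ but have nontrivial components in \emph{both} factors, so ``standard generation results for $\Alt(n)$ applied within each half'' does not by itself produce elements of $G_1\times1$ or $1\times G_2$. The paper instead shows only that the projections $\pi_i(\langle H,g\rangle\cap N)$ are all of $G_i$ (via an explicit computation giving $\pi_1(g^{-(n+1)}bg^{n+1}b)=(1,2,5)$ together with a $3$-cycle/$n$-cycle generation fact), and then rules out the remaining possibility that $\langle H,g\rangle\cap N$ is a full diagonal by observing that $\pi_1(c)$ and $\pi_2(c)$ have different cycle types. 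Your sketch omits this diagonal-vs-direct-product dichotomy entirely.

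For $2$-arc-transitivity, your plan to compute the four conjugates $g^{\pm1}ag^{\mp1}$, $g^{\pm1}bg^{\mp1}$ and find two of them in $H$ does not work as stated: only $gag^{-1}$ lies in $H$, while $g^{-1}ag$, $gbg^{-1}$, $g^{-1}bg$ do not. The paper's key computation is the single identity $(ab)^g=(ab)^c=a$, which simultaneously yields $a\in B=H\cap g^{-1}Hg$ and $ab\in A=gHg^{-1}\cap H$. You also need $|A|=|B|=2$ rather than $4$; the paper obtains $H^g\neq H$ from the fact that $H$ is not normal in $G=\langle H,g\rangle$, which in turn uses the connectedness just established.

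Finally, your brute-force check of $g^{-1}\notin HgH$ over the sixteen products $hgh'$ would have to be carried out uniformly in $n$; the paper instead deduces this condition \emph{after} the factorisation $H=(gHg^{-1}\cap H)(H\cap g^{-1}Hg)$ and $H^g\neq H$ are in hand, citing \cite[Lemma~2.3]{GLX}. Your treatment of the PA type in (iv) is correct and in fact more detailed than the paper's one-line justification via $(G_1\times G_2)\cap H=\langle b\rangle$.
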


\begin{proof}
As $(G_1\times G_2)\cap H=\langle b\rangle$ we see that $G$ is quasiprimitive of PA type on the vertex set. To show that $\Gamma_n$ is connected, we shall show $\langle H,g\rangle=G$ in light of Lemma~\ref{CosetDigraph}(c). Let $M=\langle H,g\rangle\cap(G_1\times G_2)$. Then we only need to show $M=G_1\times G_2$ since $a\in\langle H,g\rangle$.

Denote the projections of $G_1\times G_2$ onto $G_1$ and $G_2$, respectively, by $\pi_1$ and $\pi_2$. Note that $g^2$ fixes both $\{1,\dots,n\}$ and $\{n+1,\dots,2n\}$ setwise with
\[
\pi_1(g^2)=(1,2,5,7,\dots,2i-1,\dots,n,3,4,6,\dots,2j,\dots,n-1)
\]
and
\[
\pi_1(g^{n+1})=(1,3,2,4,5,\dots,n).
\]
We have $g^2\in M$ and
\[
\pi_1(g^{-(n+1)}bg^{n+1}b)=\pi_1(g^{-(n+1)}bg^{n+1})\pi_1(b)=(3,4)(2,5)(1,2)(3,4)=(1,2,5),
\]
which implies
\[
\pi_1(M)\geqslant\pi_1(\langle g^2,b\rangle)\geqslant\pi_1(\langle g^2,g^{-(n+1)}bg^{n+1}b\rangle)
=\langle\pi_1(g^2),\pi_1(g^{-(n+1)}bg^{n+1}b)\rangle=G_1
\]
using the fact that the permutation group generated by a $3$-cycle $(\alpha,\beta,\gamma)$ and an $n$-cycle with first $3$-entries $\alpha,\beta,\gamma$ is $\A_n$. It follows that
\[
\pi_2(M)=\pi_2(M^a)=(\pi_2(M))^a=G_1^a=G_2,
\]
and so $M$ is either $G_1\times G_2$ or a full diagonal subgroup of $G_1\times G_2$. However, $c=ag\in M$ while $\pi_1(c)$ and $\pi_2(c)$ have different cycle types. We conclude that $M$ is not a diagonal subgroup of $G_1\times G_2$, and so $M=G_1\times G_2$ as desired.

Now we prove that $\Gamma_n$ is $(G,2)$-arc-transitive, which is equivalent to proving that $H=(gHg^{-1}\cap H)(H\cap g^{-1}Hg)$ according to Lemma~\ref{CosetDigraph}(e). In view of
\begin{equation}\label{eq4}
(ab)^g=(ab)^{ac}=(ab)^c=a
\end{equation}
we deduce that $a\in H\cap H^g$. Since $H$ is not normal in $G=\langle H,g\rangle$, we have $H^g\neq H$. Consequently, $H\cap H^g=\langle a\rangle$. Then again by~\eqref{eq4} we deduce that
\[
H\cap H^{g^{-1}}=(H\cap H^g)^{g^{-1}}=\langle a\rangle^{g^{-1}}=\langle a^{g^{-1}}\rangle=\langle ab\rangle.
\]
This yields
\begin{equation}\label{eq11}
(gHg^{-1}\cap H)(H\cap g^{-1}Hg)=\langle a\rangle\langle ab\rangle=H.
\end{equation}

Finally, the condition $g^{-1}\notin HgH$ holds as a consequence (see~\cite[Lemma~2.3]{GLX}) of~\eqref{eq11} and the conclusion $H^g\neq H$. This completes the proof.
\end{proof}

\begin{lemma}\label{lem20}
The number of vertices of $\Gamma_n$ is not a proper power for any odd $n\geqslant5$.
\end{lemma}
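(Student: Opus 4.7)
The plan is to compute the number of vertices of $\Gamma_n$ explicitly and then rule out its being a nontrivial power via a $p$-adic valuation argument combining Bertrand's postulate with a parity check at the prime $2$.

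First I would determine $|V(\Gamma_n)|=|G{:}H|$. Since $|G_i|=n!/2$ for $i=1,2$ and $a$ has order $2$, we have $|G|=(n!)^2/2$. Next, $a$ and $b$ commute (both are involutions and one checks $b^a=b$), so $H=\langle a\rangle\times\langle b\rangle\cong C_2\times C_2$ has order $4$. Therefore
\[
|V(\Gamma_n)|=\frac{(n!)^2/2}{4}=\frac{(n!)^2}{8}.
\]

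Now suppose for contradiction that $(n!)^2/8=k^d$ for some integers $k\geqslant2$ and $d\geqslant2$. By Bertrand's postulate, since $n\geqslant5$, there exists a prime $p$ with $n/2<p\leqslant n$. For such $p$ we have $p\geqslant3$ and $p^2>n$ (as $p^2>n^2/4\geqslant n$ for $n\geqslant4$), so Legendre's formula gives $v_p(n!)=1$, and hence
\[
v_p\!\left(\tfrac{(n!)^2}{8}\right)=2v_p(n!)-v_p(8)=2.
\]
Since $d$ must divide $v_p((n!)^2/8)=2$, we conclude $d=2$, i.e., $(n!)^2/8$ must be a perfect square.

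The contradiction comes from the prime $2$: a perfect square has even $2$-adic valuation, but
\[
v_2\!\left(\tfrac{(n!)^2}{8}\right)=2v_2(n!)-3
\]
is odd. The main (very mild) thing to check is that this valuation is nonnegative, which holds since $v_2(n!)\geqslant v_2(5!)=3$ for $n\geqslant5$. There is no real obstacle in this argument; the proof is essentially elementary number theory once the vertex count is in hand.
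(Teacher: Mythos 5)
Your proposal is correct and follows essentially the same route as the paper: both compute $|V(\Gamma_n)|=(n!)^2/8$, use Bertrand's postulate to get a prime $p$ with $n/2<p\leqslant n$ and $v_p\bigl((n!)^2/8\bigr)=2$ to eliminate exponents $d\geqslant3$, and rule out $d=2$ via the prime $2$ (your odd $2$-adic valuation argument is just a restatement of the paper's observation that $(n!)^2=2(2m)^2$ is impossible). The only difference is the order in which the two cases are dispatched.
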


\begin{proof}
Suppose that the number of vertices of $\Gamma_n$ is $m^k$ for some $m\geqslant2$ and $k\geqslant2$. Then we have
\begin{equation}\label{eq12}
m^k=\frac{|G|}{|H|}=\frac{2(n!/2)^2}{4}=\frac{(n!)^2}{8}
\end{equation}
If $k=2$, then~\eqref{eq12} gives $(n!)^2=2(2m)^2$, which is not possible. Hence $k\geqslant3$. By Bertrand's Postulate, there exists a prime number $p$ such that $n/2<p<n$. Thus, the largest $p$-power dividing $n!$ is $p$, and so the largest $p$-power dividing the right hand side of~\eqref{eq12} is $p^2$. However, this implies that the largest $p$-power dividing $m^k$ is $p^2$, contradicting the conclusion $k\geqslant3$.
\end{proof}

\subsection{Normal subgroups}

\begin{lemma}\label{lem18}
Let $\Gamma$ be a $(G,s)$-arc-transitive digraph with $s\geqslant2$, $M$ be a vertex-transitive normal subgroup of $G$, and $v_1\rightarrow\dots\rightarrow v_s$ be an $(s-1)$-arc of $\Gamma$. Then $G=MG_{v_1\dots v_i}$ for each $i$ in $\{1,\dots,s\}$.
\end{lemma}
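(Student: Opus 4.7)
The plan is to prove the equivalent claim that $M$ acts transitively on $j$-arcs of $\Gamma$ for each $j\in\{0,1,\ldots,s-1\}$. This suffices because both $G$ and $M$ will then act transitively on the set of $(i-1)$-arcs, so the $G$- and $M$-orbits of $(v_1,\ldots,v_i)$ coincide, and an orbit-stabilizer count gives $G=MG_{v_1\ldots v_i}$.

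I would prove this transitivity statement by induction on $s$. For the base case $s=2$, I need $M$ arc-transitive. Let $r$ be the number of $M$-orbits on the arc set of $\Gamma$, labelled $O_1,\ldots,O_r$. Because $M$ is vertex-transitive and normal in $G$, each orbit $O_j$ contains arcs starting at every vertex, so the assignment sending a $2$-arc $(u_0,u_1,u_2)$ to the pair of labels of $(u_0,u_1)$ and $(u_1,u_2)$ gives a $G$-equivariant surjection $\phi$ from $2$-arcs onto $\{1,\ldots,r\}\times\{1,\ldots,r\}$, where $G$ acts on the target diagonally through the natural $G/M$-action on orbit labels. Since $G$ is transitive on $2$-arcs, the diagonal action on $\{1,\ldots,r\}^2$ is transitive; but no single permutation can send $(1,1)$ to $(1,2)$, so we must have $r=1$.

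For the inductive step $s\geqslant 3$, I would pass to the line digraph $L(\Gamma)$ whose vertices are arcs of $\Gamma$ and whose edges $\alpha\to\beta$ are pairs forming a $2$-arc of $\Gamma$; antisymmetry of $L(\Gamma)$ follows from that of $\Gamma$. Since $j$-arcs in $L(\Gamma)$ correspond bijectively to $(j+1)$-arcs in $\Gamma$, the digraph $L(\Gamma)$ is $(G,s-1)$-arc-transitive, and the base case applied to $\Gamma$ shows that $M$ is vertex-transitive on $L(\Gamma)$. Applying the inductive hypothesis to $L(\Gamma)$ with parameter $s-1\geqslant 2$ yields transitivity of $M$ on $(s-2)$-arcs of $L(\Gamma)$, hence on $(s-1)$-arcs of $\Gamma$; transitivity on shorter-length arcs follows by projection. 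The main obstacle is the base-case argument: verifying surjectivity of $\phi$ (which uses normality of $M$) and deducing $r=1$ from the diagonal-action observation.
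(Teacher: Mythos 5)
Your argument is correct, but it takes a genuinely different route from the paper. The paper proves the factorization directly: it chooses $m\in M$ with $v_1^m=v_2$, forms the shifted $s$-arc $u_i=v_1^{m^{i-1}}$, observes that $G_{u_0u_1\dots u_i}$ is the $M$-conjugate $mG_{u_1\dots u_iu_{i+1}}m^{-1}$, feeds this into the factorization $G_{u_1\dots u_i}=G_{u_0u_1\dots u_i}G_{u_1\dots u_iu_{i+1}}$ from its Lemma~\ref{lem19}, and projects modulo $M$ to collapse the product and chain the equalities $G=MG_{u_1}=\dots=MG_{u_1\dots u_s}$, finally transferring to an arbitrary $(s-1)$-arc by transitivity. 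You instead prove directly that $M$ is transitive on $j$-arcs for all $j\leqslant s-1$ --- which is precisely the paper's Corollary~\ref{lem17}, there \emph{deduced from} Lemma~\ref{lem18} by Frattini --- and then recover the factorization by orbit--stabilizer; so you have reversed the paper's logical order. Your base case is the standard ``normal quotient'' orbit-labelling argument (the diagonal action on $\{1,\dots,r\}^2$ fixes the diagonal setwise, forcing $r=1$), and your inductive step via the line digraph is a clean device the paper does not use. Both proofs are sound; the paper's is shorter and purely group-theoretic given Lemma~\ref{lem19}, while yours is more combinatorial, bypasses Lemma~\ref{lem19} entirely, and yields Corollary~\ref{lem17} as the primary statement rather than a consequence. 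Two small points of bookkeeping: surjectivity of $\phi$ uses only vertex-transitivity of $M$ (each $M$-orbit of arcs meets every out-neighbourhood), whereas normality of $M$ is what makes the $G$-action on the orbit labels well defined; and your opening reduction tacitly uses that a $(G,s)$-arc-transitive vertex-transitive digraph is $(G,i-1)$-arc-transitive for $i\leqslant s$, which the paper justifies via regularity in Section~\ref{sec1} and which you should cite or note explicitly.
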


\begin{proof}
Since $M$ is transitive on the vertex set of $\Gamma$, there exists $m\in M$ such that $v_1^m=v_2$. Denote $u_i=v_1^{m^{i-1}}$ for each $i$ such that $0\leqslant i\leqslant s$. Then $G_{u_0u_1\dots u_i}=mG_{u_1\dots u_iu_{i+1}}m^{-1}$ for each $i$ such that $0\leqslant i\leqslant s-1$, and $u_0\rightarrow u_1\rightarrow\dots\rightarrow u_s$ is an $s$-arc of $\Gamma$ since $v_1\rightarrow v_2$ and $m$ is an automorphism of $\Gamma$. For each $i$ in $\{1,\dots,s-1\}$, we deduce from Lemma~\ref{lem19} that
$$
G_{u_1\dots u_i}=G_{u_0u_1\dots u_i}G_{u_1\dots u_iu_{i+1}}=(mG_{u_1\dots u_iu_{i+1}}m^{-1})G_{u_1\dots u_iu_{i+1}}.
$$
Let $\varphi$ be the projection from $G$ to $G/M$. It follows that
\begin{eqnarray*}
\varphi(G_{u_1\dots u_i})&=&\varphi(m)\varphi(G_{u_1\dots u_iu_{i+1}})\varphi(m)^{-1}\varphi(G_{u_1\dots u_iu_{i+1}})\\
&=&\varphi(G_{u_1\dots u_iu_{i+1}})\varphi(G_{u_1\dots u_iu_{i+1}})\\
&=&\varphi(G_{u_1\dots u_iu_{i+1}})
\end{eqnarray*}
and so $G_{u_1\dots u_i}M=G_{u_1\dots u_iu_{i+1}}M$ for each $i$ in $\{1,\dots,s-1\}$. Again as $M$ is transitive on the vertex set of $\Gamma$, we have $G=MG_{u_1}$. Hence
$$
G=MG_{u_1}=MG_{u_1u_2}=\dots=MG_{u_1\dots u_i}=\dots=MG_{u_1\dots u_s}.
$$
Now for each $i$ in $\{1,\dots,s\}$, the digraph $\Gamma$ is $(G,i-1)$-arc-transitive, so there exists $g\in G$ such that $(v_1^g,\dots,v_i^g)=(u_1,\dots,u_i)$. Hence
$$
G=MG_{u_1\dots u_i}=M(g^{-1}G_{v_1\dots v_i}g)=MG_{v_1\dots v_i}
$$
by Lemma~\ref{Factorization}(c).
\end{proof}

By Frattini's argument, we have the following consequence of Lemma~\ref{lem18}:

\begin{corollary}\label{lem17}
Let $\Gamma$ be a $(G,s)$-arc-transitive digraph with $s\geqslant2$, and $M$ be a vertex-transitive normal subgroup of $G$. Then $\Gamma$ is $(M,s-1)$-arc-transitive.
\end{corollary}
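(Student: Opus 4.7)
The plan is a short Frattini-style argument directly from Lemma~\ref{lem18}. The first observation I need is that $\Gamma$ is already $(G,s-1)$-arc-transitive: since $G$ is vertex-transitive, $\Gamma$ is regular, and as remarked at the start of Section~\ref{sec1} a regular $(G,s)$-arc-transitive digraph with $s\geqslant 2$ is automatically $(G,s-1)$-arc-transitive. This is what will let me move an arbitrary $(s-1)$-arc onto a reference one via an element of $G$.

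Next, I fix an $(s-1)$-arc $v_1\to\cdots\to v_s$ of $\Gamma$, obtained by truncating any $s$-arc. Lemma~\ref{lem18} applied with $i=s$ yields the factorization $G=MG_{v_1\cdots v_s}$, which by the equivalence of (a) and (b) in Lemma~\ref{Factorization} may be rewritten as $G=G_{v_1\cdots v_s}M$. Given an arbitrary $(s-1)$-arc $u_1\to\cdots\to u_s$, $(G,s-1)$-arc-transitivity supplies some $g\in G$ with $v_i^g=u_i$ for $1\leqslant i\leqslant s$. Decomposing $g=hm$ with $h\in G_{v_1\cdots v_s}$ and $m\in M$, and using that $h$ fixes each $v_i$, one computes $u_i=v_i^g=v_i^{hm}=v_i^m$ for every $i$. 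Thus the single element $m\in M$ sends the reference $(s-1)$-arc to the arbitrary one, so $M$ is transitive on $(s-1)$-arcs, which is the claim.

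I do not anticipate any real obstacle; the argument is essentially a rewrite of ``$G=G_{v_1\cdots v_s}M$'' plus $G$-transitivity on $(s-1)$-arcs, which is exactly the Frattini pattern alluded to in the statement. The one point to double-check is the validity of passing from $(G,s)$-arc-transitivity to $(G,s-1)$-arc-transitivity under our hypotheses; this is covered by the regularity remark above, since vertex-transitivity of $G$ forces every vertex of $\Gamma$ to have both in- and out-neighbours.
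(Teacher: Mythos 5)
Your argument is correct and is exactly the paper's intended proof: the paper simply states ``By Frattini's argument, we have the following consequence of Lemma~\ref{lem18}'', and your write-up spells out that Frattini argument, using the factorization $G=MG_{v_1\dots v_s}$ from Lemma~\ref{lem18} together with $(G,s-1)$-arc-transitivity (which follows from regularity, as noted in Section~\ref{sec1}). No gaps.
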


To close this subsection, we give a short proof of the following result of Praeger~\cite[Theorem~3.1]{Praeger1989} using Lemma~\ref{lem18}.

\begin{proposition}\label{prop1}
Let $\Gamma$ be a $(G,2)$-arc-transitive digraph. If $G$ has a vertex-regular normal subgroup, then $\Gamma$ is a directed cycle.
\end{proposition}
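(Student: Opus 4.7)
The plan is to apply Lemma~\ref{lem18} to a single $2$-arc of $\Gamma$ and use the regularity of the normal subgroup to collapse the stabiliser chain, thereby forcing the valency of $\Gamma$ down to $1$. I fix any $2$-arc $v_0\to v_1\to v_2$ of $\Gamma$, which exists since $\Gamma$ is $(G,2)$-arc-transitive. Let $M$ denote the vertex-regular normal subgroup of $G$; in particular $M$ is vertex-transitive, so Lemma~\ref{lem18} applies. Using that lemma with $s=2$ on the $1$-arc $v_1\to v_2$ simultaneously produces two factorisations, namely $G=MG_{v_1}$ (case $i=1$) and $G=MG_{v_1v_2}$ (case $i=2$).

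Next I convert these factorisations into an equality of point stabilisers by exploiting the regularity of $M$. Regularity gives $M\cap G_v=1$ for every vertex $v$; in particular $M\cap G_{v_1}=1$ and $M\cap G_{v_1v_2}=1$. Applying Lemma~\ref{Factorization}(d) to each factorisation therefore yields
\[
|G|\;=\;|M|\,|G_{v_1}|\;=\;|M|\,|G_{v_1v_2}|,
\]
whence $|G_{v_1}|=|G_{v_1v_2}|$. Since $G_{v_1v_2}\leqslant G_{v_1}$, this forces $G_{v_1}=G_{v_1v_2}$, i.e.\ the stabiliser $G_{v_1}$ fixes the out-neighbour $v_2$ of $v_1$.

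Finally I translate this stabiliser equality into a valency statement. As $\Gamma$ is $(G,2)$-arc-transitive it is in particular $G$-arc-transitive, so $G_{v_1}$ acts transitively on $\Gamma^+(v_1)$; together with $G_{v_1}$ fixing $v_2\in\Gamma^+(v_1)$ this gives $\Gamma^+(v_1)=\{v_2\}$. Hence $\Gamma$ has out-valency $1$, and vertex-transitivity of $G$ forces the in-valency to be $1$ as well. Thus $\Gamma$ is $1$-regular, and a $1$-regular vertex-transitive digraph is a (disjoint union of) directed cycle(s), giving the conclusion. I expect no serious obstacle: the heart of the argument is simply the observation that Lemma~\ref{lem18} delivers both $G=MG_{v_1}$ and $G=MG_{v_1v_2}$ at once, and combined with $M\cap G_{v_1}=1$ this immediately squeezes $G_{v_1v_2}$ up to $G_{v_1}$, which is precisely the valency-$1$ condition.
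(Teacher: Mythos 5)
Your argument is correct and is essentially the paper's own proof: both hinge on Lemma~\ref{lem18} supplying the factorisation $G=MG_{v_1v_2}$ and on Lemma~\ref{Factorization}(d) forcing the arc-stabiliser to have the same order as the vertex-stabiliser, whence $\Gamma$ is $1$-regular. The only cosmetic differences are that the paper computes $|G_{v_1}|=|G|/|M|$ directly from regularity and uses a single inequality rather than invoking $M\cap G_{v_1}=M\cap G_{v_1v_2}=1$ in two factorisations, and it reads off the in-valency rather than the out-valency.
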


\begin{proof}
Let $N$ be a vertex-regular normal subgroup of $G$, and $u\rightarrow v$ be an arc of $\Gamma$. Then $|G|/|N|=|G_v|$, and $G=G_{uv}N$ by Lemma~\ref{lem18}. Hence by Lemma~\ref{Factorization}(d), $|G_{uv}|\geqslant|G|/|N|=|G_v|$ and so $|G_{uv}|=|G_v|$. Consequently, $\Gamma$ is $1$-regular, which means that $\Gamma$ is a directed cycle.
\end{proof}

\subsection{Two technical lemmas}

\begin{lemma}\label{lem10}
Let $A$ be an almost simple group with socle $T$ and $L$ be a nonabelian simple group. Suppose $L^n\leqslant A$ and $|T|\leqslant|L^n|$ for some positive integer $n$. Then $n=1$ and $L=T$.
\end{lemma}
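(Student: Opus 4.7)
The plan is to exploit the normality of $T$ in $A$ together with the structure of normal subgroups of direct powers of a nonabelian simple group, reducing everything to the containment $L^n\leqslant T$ and then using the cardinality hypothesis and simplicity of $T$.

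First I would consider the intersection $L^n\cap T$. Since $T\trianglelefteq A$ and $L^n\leqslant A$, this intersection is normal in $L^n$. A standard fact about direct powers of nonabelian simple groups says that the normal subgroups of $L^n$ are precisely the subproducts $\prod_{i\in S}L_i$ for $S\subseteq\{1,\dots,n\}$ (since $L$ is centerless and simple, no ``twisted'' diagonals are normal in the full group). Therefore $L^n\cap T=\prod_{i\in S}L_i$ for some $S$, and the quotient $L^n/(L^n\cap T)$ is isomorphic to $L^{n-|S|}$.

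Next I would note that this quotient embeds into $A/T$, which, as $A$ is almost simple with socle $T$, embeds into $\Out(T)$. By the Schreier conjecture (a consequence of the classification of finite simple groups), $\Out(T)$ is solvable. A direct power $L^{n-|S|}$ of a nonabelian simple group is solvable only when $n-|S|=0$, so $|S|=n$ and hence $L^n\leqslant T$. Combined with the hypothesis $|T|\leqslant|L^n|$, this forces $L^n=T$.

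Finally, since $T$ is (nonabelian) simple, it admits no nontrivial direct product decomposition, so the equality $T=L^n$ forces $n=1$ and $L=T$, completing the proof. The only nontrivial ingredient is the appeal to Schreier's conjecture to guarantee that $\Out(T)$ is solvable; the rest is elementary manipulation of normal subgroup structure.
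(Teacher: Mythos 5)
Your proof is correct and follows essentially the same route as the paper: intersect $L^n$ with $T$, use the second isomorphism theorem to embed $L^n/(L^n\cap T)$ in $A/T$, invoke the Schreier conjecture to rule out a nontrivial quotient isomorphic to a power of $L$, and then conclude $L^n=T$ from the order hypothesis. The only difference is that you spell out the classification of normal subgroups of $L^n$ explicitly, which the paper leaves implicit.
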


\begin{proof}
Note that $L^n/(L^n\cap T)\cong(L^nT)/T\leqslant A/T$, which is solvable by the Schreier conjecture. If $L^n\cap T\neq L^n$, then $L^n/(L^n\cap T)\cong L^m$ for some positive integer $m$, a contradiction. Hence $L^n\cap T=L^n$, which means $L^n\leqslant T$. This together with the condition that $|T|\leqslant|L^n|$ implies $L^n=T$. Hence $n=1$ and $L=T$, as the lemma asserts.
\end{proof}

\begin{lemma}\label{lem9}
Let $A$ be an almost simple group with socle $T$ and $S$ be a primitive permutation group on $|T|$ points. Then $S$ is not isomorphic to any subgroup of $A$.
\end{lemma}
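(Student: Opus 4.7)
The plan is to suppose for contradiction that $S$ embeds in $A$, analyze the socle of $S$ to force it to equal $T$ and act regularly, and then contradict primitivity via a Sylow-based argument.

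Let $N=\Soc(S)$. Since $S$ is primitive of degree $|T|$, $N$ is transitive on the $|T|$-point set $\Omega$, so $|N|\geqslant|T|$. If $N$ were abelian, it would be elementary abelian of order $|T|$, contradicting Burnside's $p^aq^b$ theorem, which forces $|T|$ to have at least three distinct prime divisors. Hence $N\cong L^k$ for some nonabelian simple $L$, and I would apply Lemma~\ref{lem10} to $L^k\leqslant A$ with $|L^k|=|N|\geqslant|T|$ to conclude $k=1$ and $L=T$ as subgroups of $A$. Thus $N=T$ in $A$ with $|N|=|T|$, so that $T\trianglelefteq S$ (and consequently $S$ is itself almost simple with socle $T$), and $T$ acts regularly on $\Omega$.

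Let $S_\omega$ denote a point stabilizer. Regularity of $T$ forces $T\cap S_\omega=1$, so $S=T\rtimes S_\omega$; the quotient $S_\omega\cong S/T$ embeds in $\Out(T)$, so $S_\omega$ is solvable by Schreier's conjecture, with $|S_\omega|\leqslant|\Out(T)|$. By primitivity $S_\omega$ is maximal in $S$. Since $T\cap S_\omega=1$, subgroups of $S=T\rtimes S_\omega$ containing $S_\omega$ correspond bijectively to $S_\omega$-invariant subgroups of $T$ (with $S_\omega$ acting on $T$ by conjugation via $S_\omega\hookrightarrow\Aut(T)$), with $H\leqslant T$ mapping to $H\rtimes S_\omega$. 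Maximality of $S_\omega$ is therefore equivalent to $T$ having no proper nontrivial $S_\omega$-invariant subgroup.

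The final step, which I expect to be the main obstacle, is to produce such an $S_\omega$-invariant subgroup. My plan is to choose a prime $p$ dividing $|T|$ but not dividing $|S_\omega|$---such a $p$ exists for every nonabelian finite simple $T$, since $|T|$ always has a prime divisor not dividing $|\Out(T)|$, a fact that in general appeals to the classification of finite simple groups---and then invoke a standard coprime Sylow action argument: since $S_\omega$ is solvable of order coprime to $p$, it normalizes some Sylow $p$-subgroup $P$ of $T$. Because $T$ is not a $p$-group (Burnside again), this $P$ is proper and nontrivial, providing the required $S_\omega$-invariant subgroup and contradicting the maximality of $S_\omega$.
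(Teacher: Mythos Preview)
Your reduction to the situation where $S$ is almost simple with socle $T$ acting regularly is correct and is exactly the paper's argument. The divergence is at the final step: the paper simply observes that an almost simple primitive permutation group can never have its socle regular, citing Liebeck--Praeger--Saxl~\cite{LPS1988}, whereas you attempt to prove this directly.

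Your direct argument has a genuine gap. The assertion that a solvable $p'$-group $S_\omega$ acting on $T$ must normalise some Sylow $p$-subgroup of $T$ is not a ``standard coprime Sylow action argument''. The standard result (Glauberman's lemma and its consequences) requires $\gcd(|S_\omega|,|T|)=1$, not merely $p\nmid|S_\omega|$, and the weaker hypothesis is insufficient. Concretely, take $T=\A_5$, $S=\Sy_5$, $S_\omega=\langle(1\,2)\rangle$ and $p=5$: the normaliser in $\Sy_5$ of any Sylow $5$-subgroup is a Frobenius group of order $20$, whose involutions are all double transpositions, so the transposition $(1\,2)$ normalises \emph{no} Sylow $5$-subgroup of $\A_5$. (In this particular example $p=3$ happens to work, but your argument gives no mechanism for selecting such a prime, and proving that one always exists is essentially as hard as the result of~\cite{LPS1988} you are trying to avoid.) The cleanest fix is simply to invoke~\cite{LPS1988} at this point, as the paper does.
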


\begin{proof}
Suppose for a contradiction that $S\lesssim A$. Regard $S$ as a subgroup of $A$, and write $\Soc(S)=L^n$ for some simple group $L$ and positive integer $n$. Since $S$ is primitive on $|T|$ points, $\Soc(S)$ is transitive on $|T|$ points, and so $|T|$ divides $|\Soc(S)|=|L|^n$. Consequently, $L$ is nonabelian for otherwise $T$ would be solvable. Then by Lemma~\ref{lem10} we have $\Soc(S)=L=T$. It follows that $S$ is an almost simple primitive permutation group with $\Soc(S)$ regular, contradicting~\cite{LPS1988}.
\end{proof}

\section{Vertex-quasiprimitive of type $\mathrm{SD}$}

\subsection{Constructing the graph $\Gamma(T)$}

\begin{construction}\label{Construction}
Let $T$ be a nonabelian simple group of order $k$ with $T=\{t_1,\dots,t_k\}$. Let $D=\{(t,\dots,t)\mid t\in T\}$ be a full diagonal subgroup of $T^k$ and let $g=(t_1,\dots,t_k)$. Define $\Gamma=\Cos(T^k,D,g)$ and let $V$ be the set of right cosets of $D$ in $T^k$, i.e. the vertex set of $\Gamma(T)$.
\end{construction}

\begin{lemma}\label{lem2}
$\Gamma(T)$ is a $|T|$-regular digraph.
\end{lemma}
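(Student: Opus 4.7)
The plan is to apply Lemma~\ref{CosetDigraph}(a), which tells us that $\Gamma(T) = \Cos(T^k, D, g)$ is $|D : D \cap g^{-1}Dg|$-regular. Since $|D| = |T|$, it will suffice to show that $D \cap g^{-1}Dg = 1$, and the regularity $|T|$ will follow immediately.

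To compute the intersection, I would take an arbitrary element $d = (t,\dots,t) \in D$ and ask when $gdg^{-1} \in D$. Since $g = (t_1, \dots, t_k)$, we have
\[
g d g^{-1} = (t_1 t t_1^{-1},\, t_2 t t_2^{-1},\, \dots,\, t_k t t_k^{-1}),
\]
which lies in $D$ if and only if all $k$ coordinates coincide, i.e.\ $t_i t t_i^{-1} = t_j t t_j^{-1}$ for all $i,j$. Rearranging, this is the condition $t_j^{-1} t_i \in C_T(t)$ for every pair $i,j$.

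The key observation is that since $\{t_1,\dots,t_k\} = T$, as $i$ and $j$ range over $\{1,\dots,k\}$ the products $t_j^{-1} t_i$ sweep out all of $T$ (for instance, fixing $t_j = 1_T$). Therefore the condition forces $T \subseteq C_T(t)$, i.e.\ $t \in Z(T)$. Since $T$ is a nonabelian simple group we have $Z(T) = 1$, so $t = 1$ and $d$ is trivial. Hence $D \cap g^{-1}Dg = 1$, and Lemma~\ref{CosetDigraph}(a) yields that $\Gamma(T)$ is $|D| = |T|$-regular.

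There is no real obstacle here; the argument is essentially the standard diagonal-centralizer computation, and the only thing one needs to notice is that listing the $t_i$'s as all of $T$ makes the set $\{t_j^{-1}t_i\}$ cover $T$. (Implicitly one should also check that $\Cos(T^k, D, g)$ is a bona fide digraph, i.e.\ $g \notin D$ and $g^{-1} \notin DgD$, but the statement only concerns regularity, and both conditions are either assumed in Construction~\ref{Construction} or will be verified later when $2$-arc-transitivity is established.)
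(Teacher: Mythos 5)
Your computation of $D\cap g^{-1}Dg$ is correct and is essentially the paper's own argument: the paper likewise exploits the fact that some $t_j=1$ to pin down the conjugating element and then invokes $Z(T)=1$; your rephrasing via the condition $t_j^{-1}t_i\in C_T(t)$ sweeping out all of $T$ is the same computation in different notation. So the regularity count $|D:D\cap g^{-1}Dg|=|D|=|T|$ is fine.

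The one genuine gap is the point you relegate to your final parenthesis. The lemma asserts that $\Gamma(T)$ is a $|T|$-regular \emph{digraph}, and Construction~\ref{Construction} does not assume the condition $g^{-1}\notin DgD$ --- this lemma is exactly where the paper verifies it, and it is not re-derived later from $2$-arc-transitivity. The check is short and in the same spirit as your centralizer computation: if $g^{-1}\in DgD$, then $(t_1^{-1},\dots,t_k^{-1})=(st_1t,\dots,st_kt)$ for some $s,t\in T$; taking the index $j$ with $t_j=1$ forces $s=t^{-1}$, whence $t_i^{-1}=t^{-1}t_it$ for all $i$. Since $\{t_1,\dots,t_k\}=T$, this says that inversion coincides with an inner automorphism on all of $T$, hence is an automorphism, which forces $T$ to be abelian --- a contradiction. (The condition $g\notin D$ is immediate, since the coordinates of $g$ are the $|T|\geqslant 2$ distinct elements of $T$.) You should carry out this verification rather than defer it, since without it the object whose regularity you are computing has not been shown to be a digraph at all.
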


\begin{proof}
Suppose that $D\cap g^{-1}Dg\neq1$. Then there exist $s,t\in T\setminus\{1\}$ such that $(s,\dots,s)=(t_1^{-1}tt_1,\dots,t_k^{-1}tt_k)$. Thus $s=t_i^{-1}tt_i$ for each $i$ such that $1\leqslant i\leqslant k$. Since $\{t_i\mid1\leqslant i\leqslant k\}=T$, we have $t_j=1$ for some $1\leqslant j\leqslant k$. It then follows from the equality $s=t_j^{-1}tt_j$ that $s=t$. Thus $t=t_i^{-1}tt_i$ for each $i$ such that $1\leqslant i\leqslant k$. Hence $t$ lies in the center of $T$, which implies $t=1$ as $T$ is nonabelian simple, a contradiction. Consequently, $D\cap g^{-1}Dg=1$, and so $\Cos(T^k,D,g)$ is $|T|$-regular as $|D|/|D\cap g^{-1}Dg|=|D|=|T|$.

Suppose that $g^{-1}\in DgD$. Then there exist $s,t\in T$ such that $(t_1^{-1},\dots,t_k^{-1})=(st_1t,\dots,st_kt)$. It follows that $t_i^{-1}=st_it$ for each $i$ such that $1\leqslant i\leqslant k$. Since $\{t_i\mid1\leqslant i\leqslant k\}=T$, we have $t_j=1$ for some $1\leqslant j\leqslant k$. Then the equality $t_j^{-1}=st_jt$ leads to $s=t^{-1}$. Thus $t_i^{-1}=t^{-1}t_it$ for each $i$ such that $1\leqslant i\leqslant k$. This implies that the inverse map is an automorphism of $T$ and so $T$ is abelian, a contradiction. Hence $g^{-1}\notin DgD$, from which we deduce that $\Cos(T^k,D,g)$ is a digraph, completing the proof.
\end{proof}

Next we show that up to isomorphism, the definition of $\Gamma(T)$ does not depend on the order of $t_1,t_2,\dots,t_k$.

\begin{lemma}\label{lem11}
Let $g'=(t'_1,\dots,t'_k)$ such that $T=\{t'_1,\dots,t'_k\}$. Then $\Cos(T^k,D,g)\cong\Cos(T^k,D,g')$.
\end{lemma}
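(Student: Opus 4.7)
The plan is to exhibit an automorphism of $T^k$ that stabilises $D$ setwise and maps $g$ to $g'$, and then to transport the coset digraph structure across it. Since $(t_1,\dots,t_k)$ and $(t'_1,\dots,t'_k)$ are two enumerations of the same set $T$, there is a unique permutation $\sigma\in\Sym(k)$ with $t'_i=t_{\sigma(i)}$ for every $i$. Define
\[
\phi\colon T^k\longrightarrow T^k,\qquad \phi(x_1,\dots,x_k)=(x_{\sigma(1)},\dots,x_{\sigma(k)}).
\]
Componentwise multiplication in $T^k$ is compatible with coordinate permutation, so $\phi$ is a group automorphism of $T^k$. It fixes the diagonal subgroup $D$ pointwise, and by construction $\phi(g)=g'$.

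Next I would define the vertex map $\Phi\colon V\to V$ by $\Phi(Dx)=D\phi(x)$. The identity $\phi(D)=D$ gives $\phi(Dx)=\phi(D)\phi(x)=D\phi(x)$, which ensures that $\Phi$ is well defined on right cosets and is bijective. The arc-preservation step is then the direct computation that $Dx\to Dy$ in $\Cos(T^k,D,g)$ if and only if $yx^{-1}\in DgD$, which after applying $\phi$ is equivalent to
\[
\phi(y)\phi(x)^{-1}=\phi(yx^{-1})\in\phi(DgD)=\phi(D)\phi(g)\phi(D)=Dg'D,
\]
i.e.\ to $\Phi(Dx)\to\Phi(Dy)$ in $\Cos(T^k,D,g')$. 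Hence $\Phi$ is a digraph isomorphism.

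No step here is genuinely hard; the main point is simply to notice that the symmetric group acting on $T^k$ by coordinate permutation normalises the diagonal $D$ and acts transitively on ordered enumerations of $T$, so the obvious candidate $\phi$ does everything one needs. The only mild care required is ensuring that $\Phi$ is a map on cosets rather than on representatives, and this is immediate from $\phi(D)=D$.
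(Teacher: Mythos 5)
Your proof is correct and takes essentially the same route as the paper: both construct the coordinate-permutation automorphism of $T^k$ determined by the permutation matching the two enumerations of $T$, note that it preserves $D$ and sends $g$ to $g'$, and transport the coset digraph structure across it. The only difference is cosmetic --- you spell out the well-definedness on cosets and the arc-preservation computation, which the paper leaves as routine.
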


\begin{proof}
Since $\{t_1',\dots,t'_k\}=\{t_1,\dots,t_k\}$, there exists $x\in\Sy_k$ such that $t_{i^x}=t'_i$ for each $i$ with $1\leqslant i\leqslant k$. Define an automorphism $\lambda$ of $T^k$ by $(g_1,\dots,g_k)^\lambda=(g_{1^x},\dots,g_{k^x})$ for all $(g_1,\dots,g_k)\in T^k$. Then $\lambda$ normalizes $D$ and $\lambda^{-1}g\lambda=g'$. Hence the map $Dh\mapsto Dh^\lambda$ gives an isomorphism from $\Cos(T^k,D,g)$ to $\Cos(T^k,D,g')$.
\end{proof}

For any $t\in T$, let $x(t)$ and $y(t)$ be the elements of $\Sy_k$ such that $t_{i^{x(t)}}=tt_i$ and $t_{i^{y(t)}}=t_it^{-1}$ for any $1\leqslant i\leqslant k$, and define permutations $\lambda(t)$ and $\rho(t)$ of $V$ by letting
$$
D(g_1,\dots,g_k)^{\lambda(t)}=D(g_{1^{x(t)}},\dots,g_{k^{x(t)}})\quad\text{and}\quad D(g_1,\dots,g_k)^{\rho(t)}=D(g_{1^{y(t)}},\dots,g_{k^{y(t)}})
$$
for any $(g_1,\dots,g_k)\in T^k$. For any $\varphi\in\Aut(T)$, let $z(\varphi)\in\Sy_k$ such that $t_{i^{z(\varphi)}}=t_i^\varphi$ for any $1\leqslant i\leqslant k$, and define $\delta(\varphi)\in\Sym(V)$ by letting
$$
D(g_1,\dots,g_k)^{\delta(\varphi)}=D((g_{1^{z(\varphi^{-1})}})^\varphi,\dots,(g_{k^{z(\varphi^{-1})}})^\varphi)
$$
for any $(g_1,\dots,g_k)\in T^k$. In particular, $\delta(\varphi)$ both permutes the coordinates and acts on each entry.

\begin{lemma}
$\lambda$ and $\rho$ are monomorphisms from $T$ to $\Sym(V)$, and $\delta$ is a monomorphism from $\Aut(T)$ to $\Sym(V)$.
\end{lemma}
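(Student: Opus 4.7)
The plan is to check, for each of the three maps, that (i) the underlying operation on $T^k$ normalises $D$, so the map descends to $V$; (ii) the map is a homomorphism; and (iii) the map is injective.

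For (i), the maps $\lambda(t)$ and $\rho(t)$ are induced by the coordinate permutations $x(t)$ and $y(t)$ of $T^k$, and any coordinate permutation fixes $D$ setwise since it sends $(s,\ldots,s)$ to itself. For $\delta(\varphi)$ the underlying operation on $T^k$ is the composite of the coordinate permutation $z(\varphi^{-1})$ with the componentwise automorphism by $\varphi$; the former preserves $D$ as above, and the latter sends $(s,\ldots,s)$ to $(s^\varphi,\ldots,s^\varphi)\in D$.

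For (ii), a short calculation from the defining equations, using the right-action convention $i^{\sigma\tau}=(i^\sigma)^\tau$ in $\Sy_k$, yields the three identities
\[
x(st)=x(t)x(s),\qquad y(st)=y(t)y(s),\qquad z(\varphi\psi)=z(\varphi)z(\psi).
\]
The relations $\lambda(s)\lambda(t)=\lambda(st)$ and $\rho(s)\rho(t)=\rho(st)$ then follow directly by composing the defining formulas. For $\delta$ I would additionally use that componentwise automorphism commutes with coordinate permutation and that $(\varphi\psi)^{-1}=\psi^{-1}\varphi^{-1}$, yielding $\delta(\varphi)\delta(\psi)=\delta(\varphi\psi)$.

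For (iii), I plan to evaluate each map on a single carefully chosen test coset. Choosing the enumeration so that $t_1=1$ (a notational convenience) and letting $a\in T\setminus\{1\}$, I test the action on the coset $D(a,1,\ldots,1)$. If $t\neq 1$, then $t_{1^{x(t)}}=tt_1=t\neq 1=t_1$ forces $1^{x(t)}\neq 1$, so $\lambda(t)$ shifts the unique non-identity entry to a position other than the first; a direct comparison of the two cosets then forces $a=a^{-1}=1$, a contradiction, proving injectivity of $\lambda$. The same test coset settles $\rho$ via the analogous observation $1^{y(t)}\neq 1$ for $t\neq 1$. For $\delta$, the parallel computation shows that $D(a,1,\ldots,1)=D(a,1,\ldots,1)^{\delta(\varphi)}$ forces $a^\varphi=a$, and letting $a$ vary over $T$ yields $\varphi=\mathrm{id}$. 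The step I expect to require most care is the injectivity of $\delta$, where one must track simultaneously the coordinate permutation $z(\varphi^{-1})$ and the entry-wise action of $\varphi$; the choice of test coset $D(a,1,\ldots,1)$ is designed precisely to isolate the entry-wise action while minimising interference from the coordinate permutation.
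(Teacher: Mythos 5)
Your proof is correct and follows essentially the same route as the paper: the homomorphism property comes from the composition identities for $x$, $y$ and $z$, and injectivity is established by evaluating on cosets with a single non-identity entry. The only cosmetic differences are that you normalise $t_1=1$ and deduce $a^\varphi=a$ directly for $\delta$, whereas the paper works with an arbitrary position $i$ to show $z(\varphi^{-1})=1$; both are valid, and your explicit test-coset argument for the injectivity of $\lambda$ and $\rho$ actually fills in a step the paper only asserts (namely that $\lambda(t)=1$ forces $x(t)=1$).
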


\begin{proof}
For any $s,t\in T$, noting that $x(t)x(s)=x(st)$, we have
\begin{eqnarray*}
D(g_1,\dots,g_k)^{\lambda(s)\lambda(t)}&=&D(g_{1^{x(s)}},\dots,g_{k^{x(s)}})^{\lambda(t)}\\
&=&D(g_{1^{x(t)x(s)}},\dots,g_{k^{x(t)x(s)}})\\
&=&D(g_{1^{x(st)}},\dots,g_{k^{x(st)}})\\
&=&D(g_1,\dots,g_k)^{\lambda(st)}
\end{eqnarray*}
for each $(g_1,\dots,g_k)\in T^k$, and so $\lambda(st)=\lambda(s)\lambda(t)$. This means that $\lambda$ is a homomorphism from $T$ to $\Sym(V)$. Moreover, since $\lambda(t)$ acts on $V$ as the permutation $x(t)$ on the entries, $\lambda(t)=1$ if and only if $x(t)=1$, which is equivalent to $t=1$. Hence $\lambda$ is a monomorphism from $T$ to $\Sym(V)$. Similarly, $\rho$ is a monomorphism from $T$ to $\Sym(V)$.

For any $\varphi,\psi\in\Aut(T)$, since $z(\psi^{-1})z(\varphi^{-1})=z(\psi^{-1}\varphi^{-1})=z((\varphi\psi)^{-1})$, we have
\begin{eqnarray*}
D(g_1,\dots,g_k)^{\delta(\varphi)\delta(\psi)}&=&D((g_{1^{z(\varphi^{-1})}})^\varphi,\dots,(g_{k^{z(\varphi^{-1})}})^\varphi)^{\delta(\psi)}\\
&=&D((g_{1^{z(\psi^{-1})z(\varphi^{-1})}})^{\varphi\psi},\dots,(g_{k^{z(\psi^{-1})z(\varphi^{-1})}})^{\varphi\psi})\\
&=&D(g_1,\dots,g_k)^{\delta(\varphi\psi)}
\end{eqnarray*}
for all $(g_1,\dots,g_k)\in T^k$. This means that $\delta$ is a homomorphism from $\Aut(T)$ to $\Sym(V)$. Next we prove that $\delta$ is a monomorphism. Let $\varphi\in\Aut(T)$ such that
\begin{equation}\label{eq10}
D((g_{1^{z(\varphi^{-1})}})^\varphi,\dots,(g_{k^{z(\varphi^{-1})}})^\varphi)=D(g_1,\dots,g_k)^{\delta(\varphi)}=D(g_1,\dots,g_k)
\end{equation}
for each $(g_1,\dots,g_k)\in T^k$. Take any $i\in\{1,\dots,k\}$ and $(g_1,\dots,g_k)\in T^k$ such that $g_j=1$ for all $j\neq i$ and $g_i\neq1$. By~\eqref{eq10}, there exists $t\in T$ such that $(g_{j^{z(\varphi^{-1})}})^\varphi=tg_j$ for each $j\in\{1,\dots,k\}$. As a consequence, we obtain $t=1$ by taking any $j\in\{1,\dots,k\}\setminus\{i\}$ such that $j^{z(\varphi^{-1})}\neq i$. Also, for $j\in\{1,\dots,k\}$, $(g_{j^{z(\varphi^{-1})}})^\varphi\neq t$ if and only if $j=i$. It follows that $i^{z(\varphi^{-1})}=i$. As $i$ is arbitrary, this implies that $z(\varphi^{-1})=1$, and so $\varphi=1$. This shows that $\delta$ is a monomorphism from $\Aut(T)$ to $\Sym(V)$.
\end{proof}

Let $M$ be the permutation group on $V$ induced by the right multiplication action of $T^k$. For any group $X$, the \emph{holomorph} of $X$, denoted by $\Hol(X)$, is the normalizer of the right regular representation of $X$ in $\Sym(X)$. Note that $\langle x(T),y(T),z(\Aut(T))\rangle=x(T)\rtimes z(\Aut(T))=y(T)\rtimes z(\Aut(T))$ is primitive on $\{1,\dots,k\}$ and permutation isomorphic to $\Hol(T)$. Thus,
\begin{equation}\label{eq7}
X:=\langle M,\lambda(T),\rho(T),\delta(\Aut(T))\rangle
\end{equation}
is a primitive permutation group on $V$ of type $\mathrm{SD}$ with socle $M$, and the conjugation action of $X$ on the set of $k$ factors of $M\cong T^k$ is permutation isomorphic to $\Hol(T)$. Let $v=D\in V$, a vertex of $\Gamma(T)$. For any $t\in T$ let $\sigma(t)\in M$ be the permutation of $V$ induced by right multiplication by $(t,\dots,t)$. Then
$$
X_v/\sigma(T)=X_v/(X_v\cap M)\cong X_vM/M=X/M\cong\Hol(T)
$$
since $M$ acts transitively on $V$, and therefore
\begin{equation}\label{eq8}
|X_v|=|\sigma(T)||\Hol(T)|=|T|^3|\Out(T)|.
\end{equation}

\begin{lemma}\label{lem13}
$X\leqslant\Aut(\Gamma(T))$.
\end{lemma}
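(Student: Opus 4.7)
The plan is to show that each of the four generating subgroups $M$, $\lambda(T)$, $\rho(T)$, $\delta(\Aut(T))$ of $X$ consists of automorphisms of $\Gamma(T)$. Recall that $Du\rightarrow Dv$ in $\Gamma(T)=\Cos(T^k,D,g)$ if and only if $vu^{-1}\in DgD$.

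For $M$ (right multiplication by $T^k$) this is automatic, since $Dum\rightarrow Dvm$ iff $(vm)(um)^{-1}=vu^{-1}\in DgD$; this is the content of Lemma~\ref{CosetDigraph}(b). For the remaining three pieces, I would exhibit each as the action on $T^k/D$ induced by a particular automorphism $\alpha$ of $T^k$, and then verify two things: (i) $D^\alpha = D$, so that the map on cosets $D\mathbf{h}\mapsto D\mathbf{h}^\alpha$ is well-defined, and (ii) $(DgD)^\alpha = DgD$, so that the arc relation is preserved. Condition (ii) reduces, via (i), to checking $g^\alpha\in DgD$.

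Explicitly, $\lambda(t)$ is induced by the coordinate permutation $\alpha_{x(t)}\colon(g_1,\dots,g_k)\mapsto(g_{1^{x(t)}},\dots,g_{k^{x(t)}})$. Any coordinate permutation preserves $D$, and
\[
g^{\alpha_{x(t)}}=(t_{1^{x(t)}},\dots,t_{k^{x(t)}})=(tt_1,\dots,tt_k)=(t,\dots,t)\,g\in Dg\subseteq DgD,
\]
using the defining property $t_{i^{x(t)}}=tt_i$. Similarly $\rho(t)$ is induced by $\alpha_{y(t)}$, and $g^{\alpha_{y(t)}}=g\,(t^{-1},\dots,t^{-1})\in gD\subseteq DgD$. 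Finally $\delta(\varphi)$ is induced by the automorphism $\beta_\varphi\colon(g_1,\dots,g_k)\mapsto((g_{1^{z(\varphi^{-1})}})^\varphi,\dots,(g_{k^{z(\varphi^{-1})}})^\varphi)$, which is a composition of a coordinate permutation and the entrywise application of $\varphi$; both factors preserve $D$, so $D^{\beta_\varphi}=D$. For the double coset, the definition $t_{i^{z(\varphi)}}=t_i^\varphi$ rearranges to $t_{i^{z(\varphi^{-1})}}^\varphi=t_i$, so the $i$-th coordinate of $g^{\beta_\varphi}$ equals $t_i$; that is, $g^{\beta_\varphi}=g\in DgD$.

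Once these three coset-and-double-coset computations are in hand, each of $\lambda(T)$, $\rho(T)$, $\delta(\Aut(T))$ lies in $\Aut(\Gamma(T))$, and combining with $M\leqslant\Aut(\Gamma(T))$ yields $X\leqslant\Aut(\Gamma(T))$. There is no serious obstacle; the only care needed is in the $\delta(\varphi)$ verification, where one must untangle the definitions of $z(\varphi)$ and $z(\varphi^{-1})$ to see that $\beta_\varphi$ actually fixes $g$ pointwise (rather than only setwise modulo $D$).
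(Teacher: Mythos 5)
Your proof is correct and follows essentially the same route as the paper: for each generating subgroup you verify that the arc relation (membership of the quotient in $DgD$) is preserved, using the identities $t_{i^{x(t)}}=tt_i$, $t_{i^{y(t)}}=t_it^{-1}$ and $(t_{i^{z(\varphi^{-1})}})^\varphi=t_i$, which are exactly the computations in the paper's proof. Your packaging via "an automorphism of $T^k$ fixing $D$ and $DgD$ induces a digraph automorphism" is a slightly tidier organization of the same argument, and all the individual verifications check out.
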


\begin{proof}
Clearly $M\leqslant\Aut(\Gamma(T))$, so it remains to verify that $\lambda(T)$, $\rho(T)$ and $\delta(\Aut(T))$ are subgroups of $\Aut(\Gamma(T))$. Let $t\in T$, $\varphi\in\Aut(T)$, $D(g_1,\dots,g_k)\in V$ and $D(g'_1,\dots,g'_k)\in V$. Then we have $D(g_1,\dots,g_k)\rightarrow D(g'_1,\dots,g'_k)$ in $\Gamma(T)$ if and only if
\begin{equation}\label{eq6}
(g'_1g_1^{-1},\dots,g'_kg_k^{-1})\in D(t_1,\dots,t_k)D.
\end{equation}
Since~\eqref{eq6} holds if and only if
\begin{eqnarray*}
(g'_{1^{x(t)}}g_{1^{x(t)}}^{-1},\dots,g'_{k^{x(t)}}g_{k^{x(t)}}^{-1})&\in&D(t_{1^{x(t)}},\dots,t_{k^{x(t)}})D\\
&=&D(tt_1,\dots,tt_k)D\\
&=&D(t_1,\dots,t_k)D,
\end{eqnarray*}
we conclude that $D(g_1,\dots,g_k)\rightarrow D(g'_1,\dots,g'_k)$ if and only if $D(g_1,\dots,g_k)^{\lambda(t)}\rightarrow D(g'_1,\dots,g'_k)^{\lambda(t)}$. This shows $\lambda(t)\in\Aut(\Gamma(T))$ for any $t\in T$. Similarly, we have $\rho(t)\in\Aut(\Gamma(T))$ for any $t\in T$. Also, \eqref{eq6} holds if and only if
\begin{eqnarray*}
((g'_{1^{z(\varphi^{-1})}}g_{1^{z(\varphi^{-1})}}^{-1})^\varphi,\dots,(g'_{k^{z(\varphi^{-1})}}g_{k^{z(\varphi^{-1})}}^{-1})^\varphi)
&\in&D((t_{1^{z(\varphi^{-1})}})^\varphi,\dots,(t_{k^{z(\varphi^{-1})}})^\varphi)D\\
&=&D((t_1^{\varphi^{-1}})^\varphi,\dots,(t_k^{\varphi^{-1}})^\varphi)D\\
&=&D(t_1,\dots,t_k)D.
\end{eqnarray*}
It follows that $D(g_1,\dots,g_k)\rightarrow D(g'_1,\dots,g'_k)$ if and only if $D(g_1,\dots,g_k)^{\delta(\varphi)}\rightarrow D(g'_1,\dots,g'_k)^{\delta(\varphi)}$, and so $\delta(\varphi)\in\Aut(\Gamma(T))$ for any $\varphi\in\Aut(T)$. This completes the proof.
\end{proof}

Denote $H=\langle M,\lambda(T)\rangle=M\rtimes\lambda(T)\leqslant X$.

\begin{lemma}\label{lem5}
$\Gamma(T)$ is $(H,2)$-arc-transitive.
\end{lemma}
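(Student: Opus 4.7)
The plan is to apply Lemma~\ref{lem19}: since $M\leqslant H$ and $M$ is arc-transitive on $\Gamma(T)$ by Lemma~\ref{CosetDigraph}(b), the group $H$ is also arc-transitive, so it suffices to fix a $2$-arc $v_0\to v_1\to v_2$ and show that $H_{v_0v_1}$ acts transitively on $\Gamma^+(v_1)$. I would take $v_0=D$ and $v_1=v_0^g=Dg$ with $g=(t_1,\dots,t_k)$, and then prove that the subgroup $\lambda(T)\leqslant H$ is contained in $H_{v_0v_1}$ and already acts transitively on $\Gamma^+(v_1)$; since $|\lambda(T)|=|T|=|\Gamma^+(v_1)|$ by Lemma~\ref{lem2}, transitivity will in fact be regularity.

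For the containment $\lambda(T)\leqslant H_{v_0v_1}$: the element $\lambda(t)$ fixes $v_0=D$ because permuting coordinates of the all-ones tuple does nothing, and using $t_{i^{x(t)}}=tt_i$ we compute
\[
(Dg)^{\lambda(t)}=D(t_{1^{x(t)}},\dots,t_{k^{x(t)}})=D(tt_1,\dots,tt_k)=D(t,\dots,t)g=Dg
\]
because $(t,\dots,t)\in D$.

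For the transitivity on $\Gamma^+(v_1)$, I would first parametrise the out-neighbours of $v_1=Dg$. From the definition of $\Cos$, they are precisely the cosets $Dgdg$ for $d\in D$, and using $D\cap g^{-1}Dg=1$ (established in the proof of Lemma~\ref{lem2}) these $|T|$ cosets are distinct, so I set $v_s=Dg(s,\dots,s)g$ for $s\in T$. A direct but careful computation using the definition of $\lambda(t)$ and the identity $t_{i^{x(t)}}=tt_i$ then gives
\begin{align*}
v_s^{\lambda(t)}
&=D(tt_1\,s\,tt_1,\dots,tt_k\,s\,tt_k)\\
&=D(t,\dots,t)\,g\,(st,\dots,st)\,g=Dg(st,\dots,st)g=v_{st},
\end{align*}
where the crucial step is absorbing the left factor $(t,\dots,t)\in D$ into the coset representative. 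Hence $\lambda(T)$ acts on $\{v_s:s\in T\}$ by right translation, which is regular and therefore transitive, and Lemma~\ref{lem19} yields $(H,2)$-arc-transitivity.

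The only delicate part is the displayed calculation: one must correctly interleave the coordinate-permutation action of $\lambda(t)$ with multiplication in $T^k$, and recognise that the resulting product factors as $(t,\dots,t)\cdot g(st,\dots,st)g$ so that the diagonal prefix can be absorbed. Everything else is essentially bookkeeping, with the structural inputs $D\cap g^{-1}Dg=1$ (regularity) and $\lambda(T)\leqslant H_{v_0}$ both already available.
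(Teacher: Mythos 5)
Your proof is correct, but it takes a different route from the paper's. The paper verifies the coset-factorization criterion of Remark~\ref{rem1}, namely $H_v=(H_v\cap gH_vg^{-1})(H_v\cap g^{-1}H_vg)$ at the vertex $v=D$: it introduces the diagonal-type subgroup $K=\{\sigma(t)\lambda(t)\mid t\in T\}\leqslant H_v$, proves the key identity $g^{-1}\sigma(t)\lambda(t)g=\lambda(t)$, and then writes every element $\sigma(s)\lambda(t)$ of $H_v=\sigma(T)\times\lambda(T)$ as a product of an element of $K$ with an element of $\lambda(T)=K^g$. You instead work one step along the $g$-orbit, at the arc $D\to Dg$, and apply the transitivity-on-out-neighbours criterion implicit in Lemma~\ref{lem19}: you show $\lambda(T)$ fixes both $D$ and $Dg$ and permutes the $|T|$ out-neighbours $v_s=Dg(s,\dots,s)g$ of $Dg$ regularly via $v_s^{\lambda(t)}=v_{st}$. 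Your computations check out (the parametrisation of $\Gamma^+(Dg)$, the distinctness of the $v_s$ via $D\cap g^{-1}Dg=1$, and the absorption of the diagonal prefix $(t,\dots,t)$ are all correct), and the two arguments are really two faces of the same fact --- your containment $\lambda(T)\leqslant H_{D,Dg}$ is exactly the paper's identity $K^g=\lambda(T)$ read at the translated arc. What your version buys is a concrete description of the local action: since $|\lambda(T)|=|T|=|\Gamma^+(Dg)|$, you see that the arc-stabiliser $H_{D,Dg}$ equals $\lambda(T)$ and acts regularly on $\Gamma^+(Dg)$, which is slightly more information than the bare factorization. What the paper's version buys is that it never needs to enumerate out-neighbours and plugs directly into the general criterion of Lemma~\ref{CosetDigraph}(e).
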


\begin{proof}
It is readily seen that $H_v=\sigma(T)\times\lambda(T)\cong T^2$. Let $K=\{\sigma(t)\lambda(t)\mid t\in T\}$. For any $t\in T$ and any $(g_1,\dots,g_k)\in T^k$ we have
\begin{eqnarray*}
D(g_1,\dots,g_k)^{g^{-1}\sigma(t)\lambda(t)g}&=&D(g_1t_1^{-1}t,\dots,g_kt_k^{-1}t)^{\lambda(t)g}\\
&=&D(g_{1^{x(t)}}t_{1^{x(t)}}^{-1}t,\dots,g_{k^{x(t)}}t_{k^{x(t)}}^{-1}t)^g\\
&=&D(g_{1^{x(t)}}(tt_1)^{-1}t,\dots,g_{k^{x(t)}}(tt_k)^{-1}t)^g\\
&=&D(g_{1^{x(t)}}t_1^{-1},\dots,g_{k^{x(t)}}t_k^{-1})^g\\
&=&D(g_{1^{x(t)}},\dots,g_{k^{x(t)}})\\
&=&D(g_1,\dots,g_k)^{\lambda(t)}.
\end{eqnarray*}
Hence $g^{-1}\sigma(t)\lambda(t)g=\lambda(t)$ for all $t\in T$. Consequently, $g^{-1}Kg=\lambda(T)<H_v$ and so $K\leqslant H_v\cap gH_vg^{-1}$. Now for any elements $s$ and $t$ of $T$,
$$
\sigma(s)\lambda(t)=(\sigma(s)\lambda(s))\lambda(s^{-1}t)\in K\lambda(T)=K(g^{-1}Kg).
$$
It follows that
$$
H_v\leqslant K(g^{-1}Kg)\leqslant(H_v\cap gH_vg^{-1})(H_v\cap g^{-1}H_vg),
$$
so $H_v=(H_v\cap gH_vg^{-1})(H_v\cap g^{-1}H_vg)$. Thus by Remark~\ref{rem1}, $\Gamma(T)$ is $(H,2)$-arc-transitive, as the lemma asserts.
\end{proof}

An immediate consequence of Lemma~\ref{lem5} is that $\Gamma(T)$ is $(X,2)$-arc-transitive. However, $X$ is not transitive on the set of $3$-arcs of $\Gamma(T)$, as we shall see in the next lemma.

\begin{lemma}\label{lem14}
$\Gamma(T)$ is not $(X,3)$-arc-transitive.
\end{lemma}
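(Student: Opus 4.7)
The plan is to exhibit a specific $2$-arc $v_0\rightarrow v_1\rightarrow v_2$ of $\Gamma(T)$ on which the pointwise stabiliser $X_{v_0v_1v_2}$ does not act transitively on $\Gamma^+(v_2)$; by Lemma~\ref{lem19} this prevents $(X,3)$-arc-transitivity. I would take $v_0=D$, $v_1=Dg$ and $v_2=Dg^2=D(t_1^2,\dots,t_k^2)$, which is a $2$-arc because the element $g$ shifts it forward by one step.

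The first task is to identify $X_{v_0v_1}$ and $X_{v_0v_1v_2}$ explicitly. Directly from the definitions, $(Dg)^{\lambda(t)}=D(tt_1,\dots,tt_k)=D(t,\dots,t)g=Dg$, so $\lambda(T)\leqslant X_{v_0v_1}$, while $\delta(\varphi)$ couples a coordinate permutation with the entry-wise action of $\varphi$ and hence preserves every power of $g$, giving $\delta(\Aut(T))\leqslant X_{v_0v_1v_2}$. Combining $|X_v|=|T|^3|\Out(T)|$ from~\eqref{eq8} with $(X,2)$-arc-transitivity and out-valency $|T|$ yields $|X_{v_0v_1}|=|T|^2|\Out(T)|=|\lambda(T)|\cdot|\delta(\Aut(T))|$, so $X_{v_0v_1}=\lambda(T)\rtimes\delta(\Aut(T))$. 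Testing which $\lambda(t)\delta(\varphi)\in X_{v_0v_1}$ additionally fix $v_2$, the equation $(Dg^2)^{\lambda(t)\delta(\varphi)}=Dg^2$ unravels to requiring $t^\varphi t_it^\varphi t_i^{-1}$ to be independent of $i$; specialising to the index with $t_i=1$ identifies this common value as $(t^\varphi)^2$, and comparing with an arbitrary $i$ forces $t^\varphi$ to commute with every $t_i\in T$, so $t^\varphi\in Z(T)=1$ and $t=1$. Hence $X_{v_0v_1v_2}=\delta(\Aut(T))$.

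Finally, I would parameterise the $|T|$ out-neighbours of $v_2$ as $w_s=Dg\sigma(s)g^2=D(t_1st_1^2,\dots,t_kst_k^2)$ for $s\in T$. Using the identity $t_{i^{z(\varphi^{-1})}}=t_i^{\varphi^{-1}}$, a short calculation shows $(w_s)^{\delta(\varphi)}=w_{s^\varphi}$, so the action of $\delta(\Aut(T))$ on $\Gamma^+(v_2)$ is equivalent to the natural action of $\Aut(T)$ on $T$. Because every $\varphi\in\Aut(T)$ fixes $1\in T$, the out-neighbour $w_1=Dg^3$ is fixed by all of $X_{v_0v_1v_2}$, and since $|T|\geqslant 60$ this action has at least two orbits and so is not transitive, as required. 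The main technical obstacle is the stabiliser identification, in particular ruling out mixed elements $\lambda(t)\delta(\varphi)$ with $t\neq1$; once that reduces to the centraliser statement inside $T$, the remainder of the argument is essentially the observation that $\Aut(T)$ fixes the identity element of $T$.
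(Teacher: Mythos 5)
Your proof is correct, but it takes a genuinely different route from the paper's. The paper argues indirectly and very briefly: if $\Gamma(T)$ were $(X,3)$-arc-transitive, then Corollary~\ref{lem17} applied to the vertex-transitive normal subgroup $M=\Soc(X)$ would make $\Gamma(T)$ $(M,2)$-arc-transitive, so $M_v$, of order $|T|$, would have to act transitively on the $|T|^2$ two-arcs issuing from $v$ --- impossible. Your argument is direct: you compute the stabiliser of the explicit $2$-arc $D\rightarrow Dg\rightarrow Dg^2$ and exhibit the fixed out-neighbour $Dg^3$. I checked the computations and they are sound: $(Dg^n)^{\delta(\varphi)}=Dg^n$ and $(Dg)^{\lambda(t)}=Dg$ hold as claimed; the condition $(t^\varphi t_i)^2=ct_i^2$ for all $i$ does force $t^\varphi\in Z(T)=1$ once you evaluate at the index with $t_i=1$; and $(w_s)^{\delta(\varphi)}=w_{s^\varphi}$, so $\delta(\Aut(T))$ acts on $\Gamma^+(Dg^2)$ as $\Aut(T)$ on $T$, fixing $w_1$. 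The one step you gloss over is the identification $X_{v_0v_1}=\lambda(T)\rtimes\delta(\Aut(T))$: the order count $|T|^2|\Out(T)|$ only closes the argument once you know $\lambda(T)\cap\delta(\Aut(T))=1$ and that the product is a subgroup; both follow by passing to the induced permutations of the $k$ simple factors of $M$ (there $\lambda(t)$ acts as left multiplication $x(t)$, which fixes the index of $t_j=1$ only when $t=1$, while $\delta(\varphi)$ acts as $z(\varphi^{-1})$, which fixes that index), but this should be said. The trade-off: the paper's proof is two lines given its normal-subgroup machinery, whereas yours is self-contained and yields strictly more information, namely $X_{v_0v_1v_2}=\delta(\Aut(T))\cong\Aut(T)$ and the precise orbit structure on $\Gamma^+(v_2)$.
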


\begin{proof}
Suppose that $\Gamma(T)$ is $(X,3)$-arc-transitive. Then since $M$ is a vertex-transitive normal subgroup of $X$, Corollary~\ref{lem17} asserts that $\Gamma(T)$ is $(M,2)$-arc-transitive. As a consequence, $M_v$ is transitive on $A_2(v):=\{(v_1,v_2)\in V^2\mid v\rightarrow v_1\rightarrow v_2\}$, the set of $2$-arcs starting from $v$. However, $|M_v|=|T|$ while $|A_2(v)|=|T|^2$ as $\Gamma(T)$ is $|T|$-regular. This is not possible.
\end{proof}

\subsection{Classification}

Throughout this subsection, let $T$ be a nonabelian simple group, $k\geqslant2$ be an interger, $D=\{(t,\dots,t)\mid t\in T\}$ be a full diagonal subgroup of $T^k$, $V$ be the set of right cosets of $D$ in $T^k$, and $M$ be the permutation group induced by the right multiplication action of $T^k$ on $V$. Suppose that $G$ is a permutation group on $V$ with $M\leqslant G\leqslant M.(\Out(T)\times\Sy_k)$, and $\Gamma$ is a connected $(G,2)$-arc-transitive digraph. Let $v=D\in V$ and $w$ be an out-neighbour of $v$. Then $w=D(t_1,\dots,t_k)\in V$ for some elements $t_1,\dots,t_k$ of $T$ which are not all equal. Without loss of generality, we assume $t_k=1$. Let $u=D(t_1^{-1},\dots,t_k^{-1})\in V$ and $g\in M$ be the permutation of $V$ induced by right multiplication by $(t_1,\dots,t_k)\in T^k$. Moreover, define $\{\Omega_1,\dots,\Omega_n\}$ to be the partition of $\{1,\dots,k\}$ such that $t_i=t_j$ if and only if $i$ and $j$ are in the same part of $\{\Omega_1,\dots,\Omega_n\}$. Note that $G_v\leqslant\Aut(T)\times\Sy_k$. Let $\alpha$ be the projection of $G_v$ into $\Aut(T)$ and $\beta$ be the projection of $G_v$ into $\Sy_k$. Let $A=\alpha(G_v)$ and $S=\beta(G_v)$, so that $G_v\leqslant A\times S$, where each element $\sigma$ of $A$ is induced by an automorphism of $T$ acting on $V$ as
$$
D(g_1,\dots,g_k)^\sigma=D(g_1^\sigma,\dots,g_k^\sigma)
$$
and each element $x$ of $S$ is induced by a permutation on $\{1,\dots,k\}$ acting on $V$ as
$$
D(g_1,\dots,g_k)^x=D(g_{1^{x^{-1}}},\dots,g_{k^{x^{-1}}}).
$$
As $G\geqslant M$ we have $\Inn(T)\leqslant A\leqslant\Aut(T)$. Moreover, since $G$ is $2$-arc-transitive, Lemma~\ref{lem19} implies that $G_v=G_{uv}G_{vw}$. Let $R$ be the stabilizer in $S$ of $k$ in the set $\{1,\dots,k\}$.

Take any $\sigma\in A$ and $x\in S$. Then $\sigma x\in G_u$ if and only if $x^{-1}\sigma^{-1}$ fixes $u$, that is
$$
D((t_{1^x}^{-1})^{\sigma^{-1}},\dots,(t_{(k-1)^x}^{-1})^{\sigma^{-1}},(t_{k^x}^{-1})^{\sigma^{-1}})=D(t_1^{-1},\dots,t_{k-1}^{-1},1),
$$
or equivalently,
\begin{equation}\label{eq1}
D(t_{k^x}t_{1^x}^{-1},\dots,t_{k^x}t_{(k-1)^x}^{-1},1)=D((t_1^{-1})^\sigma,\dots,(t_{k-1}^{-1})^\sigma,1).
\end{equation}
Similarly, $\sigma x\in G_w$ if and only if $x^{-1}\sigma^{-1}$ fixes $w$, which is equivalent to
\begin{equation}\label{eq2}
D(t_{k^x}^{-1}t_{1^x},\dots,t_{k^x}^{-1}t_{(k-1)^x},1)=D(t_1^\sigma,\dots,t_{k-1}^\sigma,1).
\end{equation}

\begin{lemma}\label{lem6}
$\langle t_1,\dots,t_k\rangle=T$.
\end{lemma}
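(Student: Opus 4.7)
My plan is to leverage the 2-arc-transitivity factorisation $G_v = G_{uv}G_{vw}$ (Lemma~\ref{lem19}) in tandem with equations~\eqref{eq1} and~\eqref{eq2} to produce the group-theoretic factorisation $T = L\cdot C_T(L)$, where $L := \langle t_1,\dots,t_k\rangle$; simplicity of $T$ then finishes the job.

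I would begin by noting that, under the $\Aut(T)\times\Sy_k$-embedding of $G_v$, the normal subgroup $M_v = D$ is precisely $\{(c_r,1) : r\in T\}$, where $c_r$ denotes conjugation by $r$. Fixing $r\in T$, I would decompose $(c_r,1)\in G_v$ as $(\sigma_1,x_1)(\sigma_2,x_2)$ with $(\sigma_1,x_1)\in G_{uv}$ and $(\sigma_2,x_2)\in G_{vw}$; since the ambient group is a direct product, this forces $\sigma_1\sigma_2 = c_r$ and $x_2 = x_1^{-1}$. Applying~\eqref{eq1} to $(\sigma_1,x_1)$ and~\eqref{eq2} to $(\sigma_2,x_1^{-1})$ yields
$$
t_i^{\sigma_1} = t_{i^{x_1}}t_{k^{x_1}}^{-1},\qquad t_i^{\sigma_2} = t_{k^{x_1^{-1}}}^{-1}t_{i^{x_1^{-1}}}
$$
for every $i$; composing and using $t_k = 1$ should collapse the product to the clean expression $t_i^{\sigma_1\sigma_2} = t_{k^{x_1^{-1}}}^{-1}t_i t_{k^{x_1^{-1}}}$. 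Equating with $t_i^{c_r} = r^{-1}t_ir$ for every $i$ then says exactly that $r\cdot t_{k^{x_1^{-1}}}^{-1}$ centralises each $t_i$, giving $r\in t_{k^{x_1^{-1}}}\cdot C_T(L) \subseteq L\cdot C_T(L)$.

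Since $r\in T$ was arbitrary, $T = L\cdot C_T(L)$. Because $C_T(L)$ centralises, hence normalises, $L$, this yields $L \triangleleft L\cdot C_T(L) = T$, so by simplicity $L\in\{1,T\}$. The possibility $L = 1$ is excluded because it would force $g = (1,\dots,1)$, whence $w = D\cdot g = v$, contradicting that $v\to w$ is an arc. Therefore $L = T$. The step I expect to require the most care is the composition calculation producing $t_i^{\sigma_1\sigma_2} = t_{k^{x_1^{-1}}}^{-1}t_i t_{k^{x_1^{-1}}}$: one has to shepherd the index shifts through both $x_1$ and $x_1^{-1}$, and carefully exploit $t_k = 1$ to make the ostensibly extraneous factor $t_{k^{x_1}}^{\sigma_2}$ collapse to $t_{k^{x_1^{-1}}}^{-1}$ so that only conjugation by $t_{k^{x_1^{-1}}}$ survives.
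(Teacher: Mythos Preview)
Your argument is correct, and the composition calculation does go through exactly as you outline (the key collapse $(t_{k^{x_1}})^{\sigma_2}=t_{k^{x_1^{-1}}}^{-1}t_{(k^{x_1})^{x_1^{-1}}}=t_{k^{x_1^{-1}}}^{-1}t_k=t_{k^{x_1^{-1}}}^{-1}$ works precisely because $t_k=1$). However, your route is noticeably more laborious than the paper's. The paper never decomposes a specific element of $G_v$ nor composes the two formulas; instead it simply reads off from~\eqref{eq1} that every $\sigma\in\alpha(G_{uv})$ sends each $t_i$ to $t_{i^x}t_{k^x}^{-1}\in L$, and from~\eqref{eq2} that every $\sigma\in\alpha(G_{vw})$ sends each $t_i$ to $t_{k^x}^{-1}t_{i^x}\in L$. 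Thus both $\alpha(G_{uv})$ and $\alpha(G_{vw})$ stabilise $L$ setwise, hence so does $A=\alpha(G_{uv})\alpha(G_{vw})$; since $\Inn(T)\leqslant A$ this gives $L\trianglelefteq T$ directly. Your approach trades this one-line normalisation observation for an explicit verification that each inner automorphism $c_r$ restricts to conjugation by an element of $L$ on $L$, which yields the slightly sharper intermediate statement $T=L\cdot C_T(L)$ but at the cost of the index-chasing you flagged.
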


\begin{proof}
For all $\sigma\in\alpha(G_{uv})$, there exists $x\in S$ such that $\sigma x\in G_u$. Then~\eqref{eq1} implies that $t_{k^x}t_{i^x}^{-1}=(t_i^{-1})^\sigma$ and thus $t_i^\sigma=t_{i^x}t_{k^x}^{-1}$ for all $i$ such that $1\leqslant i\leqslant k$. This shows that $\alpha(G_{uv})$ stabilizes $\langle t_1,\dots,t_k\rangle$. Similarly, for all $\sigma\in\alpha(G_{vw})$, there exists $x\in S$ such that $\sigma x\in G_w$. Then~\eqref{eq2} implies that $t_i^\sigma=t_{k^x}^{-1}t_{i^x}$ for all $i$ such that $1\leqslant i\leqslant k$. Accordingly, $\alpha(G_{vw})$ also stabilizes $\langle t_1,\dots,t_k\rangle$. It follows that $A=\alpha(G_v)=\alpha(G_{uv}G_{vw})=\alpha(G_{uv})\alpha(G_{vw})$ stabilizes $\langle t_1,\dots,t_k\rangle$. Hence $\langle t_1,\dots,t_k\rangle=T$ since $\Inn(T)\leqslant A\leqslant\Aut(T)$.
\end{proof}

\begin{lemma}\label{lem3}
$G_{uv}\cap(A\times R)=G_{vw}\cap(A\times R)$.
\end{lemma}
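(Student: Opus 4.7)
The plan is to exploit the normalization $t_k=1$: restricting $x$ to the stabilizer $R$ of the last coordinate kills the offset term $t_{k^x}$ that makes conditions~\eqref{eq1} and~\eqref{eq2} look different, thereby making them coincide.

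First I would note that elements of $A\times S$ factor uniquely as $\sigma x$ with $\sigma\in A$ and $x\in S$, and that the $A$-action and the $S$-action commute on $V$ because one acts entry-wise on coordinates of $T^k$ while the other permutes positions. In particular every element of $G_{uv}\cap(A\times R)$ or $G_{vw}\cap(A\times R)$ lies in $G_v$ and can be written as such a product with $x\in R$. So it suffices to show that for $\sigma x\in G_v\cap(A\times R)$, the condition $\sigma x\in G_u$ is equivalent to $\sigma x\in G_w$.

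Next I would observe that the coset identity~\eqref{eq1} is in fact a coordinate-wise equality of tuples, not merely an equality of cosets of $D$: since the last coordinate on each side is $1$, the only diagonal element of $D$ that could bridge the two tuples is the identity. Consequently~\eqref{eq1} is equivalent to $t_i^\sigma=t_{i^x}t_{k^x}^{-1}$ for all $i\in\{1,\dots,k\}$ (the rearrangement used in the proof of Lemma~\ref{lem6}), and by the same argument~\eqref{eq2} is equivalent to $t_i^\sigma=t_{k^x}^{-1}t_{i^x}$ for all $i\in\{1,\dots,k\}$.

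Finally, for $x\in R$ we have $k^x=k$ and hence $t_{k^x}=t_k=1$, so both conditions collapse to the single condition $t_i^\sigma=t_{i^x}$ for every $i$. Thus, for any $\sigma x\in G_v\cap(A\times R)$, membership in $G_u$ is equivalent to membership in $G_w$, which yields $G_{uv}\cap(A\times R)=G_{vw}\cap(A\times R)$. There is no serious obstacle here; the one subtlety is recognizing that the coset equations in~\eqref{eq1} and~\eqref{eq2} sharpen to coordinate-wise equalities thanks to the normalization $t_k=1$, which is precisely what exposes the multiplicative correction $t_{k^x}$ and then allows it to be eliminated by restricting $x$ to $R$.
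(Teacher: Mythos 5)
Your proof is correct and follows essentially the same route as the paper: restrict to $x\in R$ so that $t_{k^x}=t_k=1$, whereupon conditions~\eqref{eq1} and~\eqref{eq2} both reduce to $t_{i^x}=t_i^\sigma$ for all $i$. Your explicit remark that the coset equalities sharpen to coordinate-wise equalities (because both sides have last entry $1$) is the same justification the paper uses implicitly when reading off entries from~\eqref{eq1} and~\eqref{eq2}.
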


\begin{proof}
Let $\sigma\in A$ and $x\in R$. Then $t_{k^x}=t_k=1$, and thus~\eqref{eq2} shows that $\sigma x\in G_w$ if and only if $t_{i^x}=t_i^\sigma$ for all $i$ such that $1\leqslant i\leqslant k$. Similarly, \eqref{eq1} shows that $\sigma x\in G_u$ if and only if $t_{i^x}^{-1}=(t_i^{-1})^\sigma$ for all $i$ such that $1\leqslant i\leqslant k$. Since this is equivalent to $t_{i^x}=t_i^\sigma$ for all $i$, we conclude that $\sigma x\in G_w$ if and only if $\sigma x\in G_u$. As a consequence, $G_{uv}\cap(A\times R)=G_{vw}\cap(A\times R)$.
\end{proof}

\begin{lemma}\label{lem7}
$G_{uv}\cap A=G_{vw}\cap A=1$.
\end{lemma}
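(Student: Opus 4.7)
The plan is to compute each intersection directly using the defining equations~\eqref{eq1} and \eqref{eq2}, and then invoke Lemma~\ref{lem6} to rule out any nontrivial automorphism. I will begin with $G_{uv}\cap A$.

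Take $\sigma\in G_{uv}\cap A$. Viewing $\sigma$ as the pair $(\sigma,1)\in A\times S$ inside $G_v$, the condition $\sigma\in G_u$ is precisely \eqref{eq1} with $x=1$. Since $t_k=1$, this becomes
\[
D(t_1^{-1},\dots,t_{k-1}^{-1},1)=D((t_1^{-1})^\sigma,\dots,(t_{k-1}^{-1})^\sigma,1).
\]
Both tuples have $1$ in the last coordinate, and the elements of $D$ are constant tuples, so the only element of $D$ that can move one tuple to the other is the identity. Hence $t_i^{-1}=(t_i^{-1})^\sigma$, that is, $t_i^\sigma=t_i$, for every $i\in\{1,\dots,k\}$. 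By Lemma~\ref{lem6} the $t_i$ generate $T$, so $\sigma$ fixes $T$ pointwise, and therefore $\sigma=1$.

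The argument for $G_{vw}\cap A$ is the same with \eqref{eq2} replacing \eqref{eq1}: taking $x=1$ reduces to $D(t_1,\dots,t_{k-1},1)=D(t_1^\sigma,\dots,t_{k-1}^\sigma,1)$, which again forces $t_i^\sigma=t_i$ for all $i$ and hence $\sigma=1$. (Alternatively, once $G_{uv}\cap A=1$ is known, the equality $G_{vw}\cap A=1$ is immediate from Lemma~\ref{lem3}, since $A=A\times\{1\}\leqslant A\times R$ gives $G_{uv}\cap A=G_{uv}\cap(A\times R)\cap A=G_{vw}\cap(A\times R)\cap A=G_{vw}\cap A$.)

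There is no real obstacle here: the lemma is essentially a bookkeeping consequence of \eqref{eq1}/\eqref{eq2} evaluated at the identity permutation of coordinates, together with the generation statement of Lemma~\ref{lem6}. The only point requiring care is the interpretation of ``$\sigma\in A$'' as an element of $G_v\leqslant A\times S$ with trivial $S$-component, which is what allows us to set $x=1$ in the two displayed equations.
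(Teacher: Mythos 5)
Your proof is correct and follows essentially the same route as the paper: evaluate \eqref{eq1}/\eqref{eq2} at the trivial coordinate permutation, use that both tuples end in $1$ to force $t_i^\sigma=t_i$ for all $i$, and conclude via Lemma~\ref{lem6}. The paper simply streamlines this by first invoking Lemma~\ref{lem3} to reduce to the single case $G_{vw}\cap A=1$, which is the shortcut you note parenthetically.
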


\begin{proof}
In view of Lemma~\ref{lem3} we only need to prove that $G_{vw}\cap A=1$. For any $\sigma\in G_{vw}\cap A$, \eqref{eq2} shows that $D(t_1,\dots,t_{k-1},1)=D(t_1^\sigma,\dots,t_{k-1}^\sigma,1)$, and so $t_i^\sigma=t_i$ for all $i$ such that $1\leqslant i\leqslant k$. By Lemma~\ref{lem6}, this implies that $\sigma=1$ and so $G_{vw}\cap A=1$, as desired.
\end{proof}

\begin{lemma}\label{lem1}
Both $\beta(G_{uv})$ and $\beta(G_{vw})$ preserve the partition $\{\Omega_1,\dots,\Omega_n\}$.
\end{lemma}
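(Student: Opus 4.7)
The plan is to read off the action of $x \in \beta(G_{uv}) \cup \beta(G_{vw})$ on the multiset $(t_1,\dots,t_k)$ directly from equations~\eqref{eq1} and~\eqref{eq2}, and then observe that since the corresponding $\sigma \in A$ is a bona fide automorphism of $T$, it must respect equalities among the $t_i$. This forces $x$ to permute the parts of the partition $\{\Omega_1,\dots,\Omega_n\}$.

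In more detail, fix $x \in \beta(G_{vw})$ and choose $\sigma \in A$ with $\sigma x \in G_{vw}$. Exactly as in the proof of Lemma~\ref{lem6}, equation~\eqref{eq2} (after noting that the $k$-th entry equality forces the cancellation by some diagonal constant, which here is $t_{k^x}^{-1}$) gives
\[
t_i^\sigma = t_{k^x}^{-1} t_{i^x} \qquad \text{for all } 1 \leqslant i \leqslant k.
\]
Now take $i, j$ in the same part $\Omega_l$, so that $t_i = t_j$. Applying $\sigma$ to both sides and using the displayed formula yields $t_{k^x}^{-1} t_{i^x} = t_{k^x}^{-1} t_{j^x}$, hence $t_{i^x} = t_{j^x}$, which means $i^x$ and $j^x$ lie in the same part. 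Thus $x$ preserves the partition $\{\Omega_1,\dots,\Omega_n\}$.

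The argument for $\beta(G_{uv})$ is completely analogous: if $\sigma x \in G_{uv}$, then equation~\eqref{eq1} gives $t_i^\sigma = t_{i^x} t_{k^x}^{-1}$ for all $i$, and equality $t_i = t_j$ again yields $t_{i^x} = t_{j^x}$ after cancelling $t_{k^x}^{-1}$ on the right. There is no real obstacle here; the only minor point to be careful about is that the formulas from~\eqref{eq1} and~\eqref{eq2} hold as genuine equalities in $T$ (not merely modulo $D$), which is exactly what was established in the preliminary paragraph deriving those equations.
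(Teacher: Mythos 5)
Your proof is correct and follows essentially the same route as the paper's: extract the entrywise identity $t_i^\sigma = t_{i^x}t_{k^x}^{-1}$ (resp.\ $t_{k^x}^{-1}t_{i^x}$) from \eqref{eq1} (resp.\ \eqref{eq2}) using that the $k$-th entries are both $1$, then apply $\sigma$ to the equality $t_i=t_j$ and cancel. The only cosmetic difference is that you work out the $G_{vw}$ case in detail where the paper details the $G_{uv}$ case.
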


\begin{proof}
Let $x\in\beta(G_{uv})$. Then there exists $\sigma\in A$ such that $\sigma x\in G_u$, and so~\eqref{eq1} gives
\begin{equation}\label{eq3}
t_{k^x}t_{i^x}^{-1}=(t_i^{-1})^\sigma
\end{equation}
for all $i$ such that $1\leqslant i\leqslant k$. For any $i,j\in\{1,\dots,k\}$, if $i$ and $j$ are in the same part of $\{\Omega_1,\dots,\Omega_n\}$, then $t_i=t_j$ and so $(t_i^{-1})^\sigma=(t_j^{-1})^\sigma$, which leads to $t_{i^x}=t_{j^x}$ by~\eqref{eq3}. Since $t_{i^x}=t_{j^x}$ if and only if $i^x$ and $j^x$ are in the same part of $\{\Omega_1,\dots,\Omega_n\}$, this shows that $x$, hence $\beta(G_{uv})$, preserves the partition $\{\Omega_1,\dots,\Omega_n\}$. The proof for $\beta(G_{vw})$ is similar.
\end{proof}

\begin{lemma}\label{lem4}
$t_1,\dots,t_k$ are pairwise distinct.
\end{lemma}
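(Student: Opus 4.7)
The plan is to assume for contradiction that $t_1, \dots, t_k$ are not pairwise distinct (so $n < k$) and to exhibit a proper nonempty $G$-invariant subset of $V$ containing $v$, contradicting the transitivity of $G$ on $V$.

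Let
\[
D^* = \{(h_1, \dots, h_k) \in T^k : h_i = h_j \text{ whenever } i,j \text{ lie in the same part of the partition}\},
\]
a subgroup of $T^k$ of order $|T|^n$ containing $D$, and set $B = \{Dh : h \in D^*\} \subseteq V$. Since $D \leq D^*$, we have $|B| = |D^*|/|D| = |T|^{n-1}$, which is strictly smaller than $|V| = |T|^{k-1}$ because $n < k$; and $v = D \in B$, so $B$ is a proper nonempty subset of $V$.

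First I would show that $G_v$ preserves $B$ setwise. The crucial input is that $S = \beta(G_v)$ preserves the partition $\{\Omega_1, \dots, \Omega_n\}$: by Lemma~\ref{lem19} the $(G, 2)$-arc-transitivity gives $G_v = G_{uv} G_{vw}$, and by Lemma~\ref{lem1} both $\beta(G_{uv})$ and $\beta(G_{vw})$ lie in the partition-preserving subgroup of $\Sy_k$, so their product $S$ does as well. Given this, the explicit action formula from the classification setup shows that for any $(\sigma, x) \in G_v \leq A \times S$, the image of a point in $B$ again lies in $B$, because $x^{-1}$ preserves the partition and the diagonal automorphism $\sigma$ respects equality of entries. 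Second, $g = (t_1, \dots, t_k)$ lies in $D^*$ by the very definition of the partition, and since $D^*$ is a subgroup of $T^k$, right multiplication by $g$ maps each coset $Dh$ with $h \in D^*$ to another such coset; hence $g$ preserves $B$. Now connectedness of $\Gamma$ combined with Lemma~\ref{CosetDigraph}(c) applied to the coset-digraph presentation $\Gamma \cong \Cos(G, G_v, g)$ gives $\langle G_v, g\rangle = G$, so $G$ preserves $B$. Since $G$ is transitive on $V$ and $v \in B$, this forces $B = V$, contradicting $|B| < |V|$.

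The main obstacle is spotting the right $G$-invariant subset: the within-part diagonal subgroup $D^*$ is tailored so that $G_v$ preserves it via the partition-preserving property of $S$ (coming from Lemmas~\ref{lem19} and \ref{lem1}), while $g$ preserves it tautologically since $g$ itself lies in $D^*$ by the construction of the partition. Once this invariant subset is identified, the contradiction with transitivity of $G$ on $V$ is immediate.
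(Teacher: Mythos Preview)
Your proof is correct and is essentially the same argument as the paper's: your set $B=\{Dh:h\in D^*\}$ is exactly the paper's $U$, and both proofs use Lemma~\ref{lem1} (via $G_v=G_{uv}G_{vw}$) to see that $S$ preserves the partition, then observe that $A$, $S$ and $g$ all stabilise this set, and finally invoke $G=\langle G_v,g\rangle$ (from connectedness) together with vertex-transitivity to force $B=V$. The only cosmetic difference is that you package the invariant set as the cosets of $D$ inside the subgroup $D^*$, whereas the paper describes it directly as the set of cosets with constant entries on each block.
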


\begin{proof}
Let $U$ be the subset of $V$ consisting of the elements $D(g_1,\dots,g_k)$ with $g_i=g_j$ whenever $i$ and $j$ are in the same part of $\{\Omega_1,\dots,\Omega_n\}$. By Lemma~\ref{lem1}, both $\beta(G_{uv})$ and $\beta(G_{vw})$ preserve the partition $\{\Omega_1,\dots,\Omega_n\}$. Then since $S=\beta(G_v)=\beta(G_{uv}G_{vw})=\beta(G_{uv})\beta(G_{vw})$, we derive that $S$ preserves the partition $\{\Omega_1,\dots,\Omega_n\}$. As a consequence, $S$ stabilizes $U$ setwise. Meanwhile, $A$ and $g$ stabilize $U$ setwise. Hence $G=\langle G_v,g\rangle\leqslant\langle A\times S,g\rangle$ stabilizes $U$ setwise, which implies $U=V$. Thus each $\Omega_i$ has size $1$ and so $t_1,\dots,t_k$ are pairwise distinct.
\end{proof}

\begin{lemma}\label{lem8}
$G_{uv}\cap R=G_{vw}\cap R=1$.
\end{lemma}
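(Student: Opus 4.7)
The plan is to deduce Lemma~\ref{lem8} as a quick consequence of Lemmas~\ref{lem3} and~\ref{lem4}, using the characterization~\eqref{eq2} from the general discussion preceding Lemma~\ref{lem6}.

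First I would observe that $R$, viewed as the subgroup $\{1\}\times R$ of $A\times S$, is contained in $A\times R$. Therefore Lemma~\ref{lem3} gives
\[
G_{uv}\cap R=(G_{uv}\cap(A\times R))\cap R=(G_{vw}\cap(A\times R))\cap R=G_{vw}\cap R,
\]
so it suffices to prove that $G_{vw}\cap R=1$.

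Next I would take an arbitrary $x\in G_{vw}\cap R$ and write it as $1\cdot x\in A\times S$ with $\sigma=1$. Since $x\in R$ we have $k^x=k$, and hence $t_{k^x}=t_k=1$. Specializing~\eqref{eq2} (the criterion for $\sigma x\in G_w$) to $\sigma=1$ with this value of $t_{k^x}$ yields
\[
D(t_{1^x},\dots,t_{(k-1)^x},1)=D(t_1,\dots,t_{k-1},1),
\]
which forces $t_{i^x}=t_i$ for every $i\in\{1,\dots,k\}$.

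Finally, Lemma~\ref{lem4} tells us that $t_1,\dots,t_k$ are pairwise distinct, so the equalities $t_{i^x}=t_i$ force $i^x=i$ for all $i$, hence $x=1$. Combined with the first step, this gives $G_{uv}\cap R=G_{vw}\cap R=1$, as required. There is no real obstacle here — the proof is essentially a mechanical combination of Lemmas~\ref{lem3} and~\ref{lem4} with the already-derived criterion~\eqref{eq2}; the substantive content was established in Lemma~\ref{lem4}.
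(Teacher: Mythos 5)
Your proof is correct and follows essentially the same route as the paper's: reduce to $G_{vw}\cap R=1$ via Lemma~\ref{lem3}, specialize~\eqref{eq2} with $\sigma=1$ and $t_{k^x}=1$ to get $t_{i^x}=t_i$, and conclude from the pairwise distinctness in Lemma~\ref{lem4}. No issues.
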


\begin{proof}
In view of Lemma~\ref{lem3} we only need to prove that $G_{vw}\cap R=1$. Let $x\in G_{vw}\cap R$. Then $t_{k^x}=t_k=1$, and so~\eqref{eq2} shows that $t_{i^x}=t_i$ for all $i$ such that $1\leqslant i\leqslant k$. Note that $t_1,\dots,t_k$ are pairwise distinct by Lemma~\ref{lem4}. We conclude that $x=1$ and so $G_{vw}\cap R=1$, as desired.
\end{proof}

\begin{lemma}\label{lem12}
$k=|T|$, $\{t_1,\dots,t_k\}=T$ and $\Gamma\cong\Gamma(T)$ as given in \emph{Construction~\ref{Construction}}. Moreover, if $G$ is vertex-primitive, then the induced permutation group of $G$ on the $k$ copies of $T$ is a subgroup of $\Hol(T)$ containing $\Soc(\Hol(T))$.
\end{lemma}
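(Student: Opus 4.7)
The plan is to nail down the data $(t_1,\ldots,t_k)$ up to the symmetries captured by Lemma~\ref{lem11}, and then apply Lemma~\ref{lem11} to identify $\Gamma$ with $\Gamma(T)$. Since the $t_i$ are pairwise distinct by Lemma~\ref{lem4}, I would first identify $\{1,\ldots,k\}$ with $\{t_1,\ldots,t_k\}\subseteq T$ via $i\mapsto t_i$, and interpret the permutation action of $S=\beta(G_v)\leqslant\Sy_k$ as a permutation action on this subset of $T$. Reading equation~\eqref{eq2}, each element $\sigma x\in G_{vw}$ induces on $\{t_1,\ldots,t_k\}$ the permutation $t\mapsto t_{k^x}\cdot t^\sigma$, the restriction of the element $(t_{k^x},\sigma)\in T\rtimes\Aut(T)=\Hol(T)$; similarly each $\sigma x\in G_{uv}$ induces (via~\eqref{eq1}) the ``right-handed'' permutation $t\mapsto t^\sigma\cdot t_{k^x}^{-1}$. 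This yields a group homomorphism $\phi:G_{vw}\to\Hol(T)$, injective by Lemmas~\ref{lem7} and~\ref{lem8}, whose image preserves $\{t_1,\ldots,t_k\}$ setwise.

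Next I would show $\{t_1,\ldots,t_k\}=T$. The stabilizer of $1=t_k$ in $\phi(G_{vw})$ equals $\phi(G_{vw}\cap(A\times R))$, and one checks via~\eqref{eq2} that this is trivial, so $\phi(G_{vw})$ acts semi-regularly on $\{t_1,\ldots,t_k\}$, and the orbit of $1$ is $\{t_{k^x}:x\in\beta(G_{vw})\}$. I would then argue that $\beta(G_{vw})$ is transitive on $\{1,\ldots,k\}$ by combining 2-arc-transitivity $G_v=G_{uv}G_{vw}$ with the fact that the full set of out-neighbours of $v$ must be permuted transitively by $G_v$ and partitioned into $D$-orbits (each of size $|T|$, since $D_w=1$ as $T$ has trivial centre). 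Semi-regularity plus transitivity then gives $|G_{vw}|=k$ and shows $\phi(G_{vw})$ contains the translation subgroup $\{(t_j,1):1\leqslant j\leqslant k\}\subseteq\Hol(T)$. Since this translation subgroup preserves $\{t_1,\ldots,t_k\}$ and $1\in\{t_1,\ldots,t_k\}$, we get $\{t_j\cdot 1:j\}=\{t_j\}\subseteq\{t_1,\ldots,t_k\}$, and closure under left translation together with Lemma~\ref{lem6} forces $\{t_1,\ldots,t_k\}=T$, hence $k=|T|$.

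With $k=|T|$ and $\{t_1,\ldots,t_k\}=T$, the coset digraph $\Cos(T^k,D,g)$ defining $\Gamma$ (via the normal transitive $M=T^k$ and Lemma~\ref{CosetDigraph}) is, up to the reordering freedom of Lemma~\ref{lem11}, exactly $\Gamma(T)$ from Construction~\ref{Construction}, yielding $\Gamma\cong\Gamma(T)$. For the final claim, when $G$ is vertex-primitive we have $G_v$ maximal in $G$; the induced permutation group $\bar G$ of $G$ on the $k=|T|$ simple factors of $M$ is the image of $S$ modulo the $\Out(T)$-kernel, acting faithfully on $T$. The image of $\phi(G_{vw})$ provides the regular subgroup $T$ of $\Hol(T)$ inside $\bar G$, so $\bar G\geqslant\Soc(\Hol(T))$. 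To see $\bar G\leqslant\Hol(T)$, I would invoke Lemma~\ref{lem9}: if $\bar G$ were a primitive subgroup of $\Sym(T)$ not lying in $\Hol(T)$, its socle would have to be a nonabelian simple group distinct from the left-regular $T$, contradicting Lemma~\ref{lem9} applied to the almost simple completion of the normalizer.

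The main obstacle is the middle step: converting the abstract $\Hol(T)$-embedding of $G_{vw}$ (which preserves $\{t_i\}$ and acts semi-regularly) into the concrete statement $\{t_1,\ldots,t_k\}=T$. This requires showing that $\beta(G_{vw})$ is transitive on $\{1,\ldots,k\}$, for which the cleanest route combines the $D$-orbit count on $\Gamma^+(v)$ with the factorization $G_v=G_{uv}G_{vw}$ and Lemma~\ref{lem19}; once transitivity is in hand, everything else follows quickly from the closure of $\{t_i\}$ under the resulting regular $T$-action inside $\Hol(T)$.
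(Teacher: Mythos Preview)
Your central claim --- that $\phi(G_{vw})$ acts semi-regularly on $\{t_1,\dots,t_k\}$ because the stabiliser of $t_k=1$ is trivial --- is false. The stabiliser of $1$ in $\phi(G_{vw})$ is $\phi\bigl(G_{vw}\cap(A\times R)\bigr)$, and Lemmas~\ref{lem7} and~\ref{lem8} only give $G_{vw}\cap A=1$ and $G_{vw}\cap R=1$; they say nothing about $G_{vw}\cap(A\times R)$, which can contain elements $\sigma x$ with both $\sigma\neq1$ and $x\neq1$. Indeed, once the lemma is proved one finds $|G_{vw}\cap(A\times R)|=|G_{uvw}|=|S|/|T|$, and for $G=X$ as in~\eqref{eq7} this equals $|\Aut(T)|$, far from $1$. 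With semi-regularity gone, you lose the conclusion $|G_{vw}|=k$, and with it the claim that $\phi(G_{vw})$ contains the translation subgroup (which was in any case unjustified: a regular subgroup of $\Hol(T)$ on a subset need not consist of translations, since $\phi(\sigma x)=(t_{k^x},\sigma)$ is a translation only when $\sigma=1$, i.e.\ when $\sigma x\in G_{vw}\cap S$). The argument for transitivity of $\beta(G_{vw})$ via $D$-orbits on $\Gamma^+(v)$ is also not spelled out in a way that yields the claim.

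The paper proceeds quite differently, by a counting squeeze rather than an explicit $\Hol(T)$-embedding. From Lemma~\ref{lem3} one gets $G_{uvw}\cap(A\times R)=G_{uv}\cap(A\times R)$, and this feeds the chain
\[
\frac{|G_v|}{|G_{uv}|}=\frac{|G_{uv}|}{|G_{uvw}|}\leqslant\frac{|G_{uv}|}{|G_{uv}\cap(A\times R)|}\leqslant\frac{|A\times S|}{|A\times R|}=k.
\]
Combining with $|G_v|=|G_v\cap A|\,|S|\geqslant|T|\,|S|$ (since $\Inn(T)\leqslant G_v\cap A$) and $|T|\geqslant k$ (Lemma~\ref{lem4}) forces $k=|T|$ and $\beta(G_{uv})=\beta(G_{vw})=S$. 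For the ``moreover'' clause you would also need two distinct regular normal subgroups of $S$, not one: the paper obtains these as $G_{uv}\cap S$ and $G_{vw}\cap S$, shows they are nontrivial via Lemma~\ref{lem9}, regular via Lemma~\ref{lem8}, and distinct because $G_{uvw}\leqslant A\times R$; only then does Lemma~\ref{lem10} pin down $\Soc(S)\cong T^2$.
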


\begin{proof}
It follows from Lemma~\ref{lem3} that $G_{uvw}\cap(A\times R)=G_{uv}\cap(A\times R)$. Then as $G$ is $2$-arc-transitive on $\Gamma$, we have
\begin{eqnarray}\label{eq5}
\frac{|G_v|}{|G_{uv}|}=\frac{|G_{uv}|}{|G_{uvw}|}&\leqslant&\frac{|G_{uv}|}{|G_{uvw}\cap(A\times R)|}\\
&=&\frac{|G_{uv}|}{|G_{uv}\cap(A\times R)|}=\frac{|G_{uv}(A\times R)|}{|A\times R|}\leqslant\frac{|A\times S|}{|A\times R|}=k.\nonumber
\end{eqnarray}
We thus obtain $|G_v|\leqslant k|G_{uv}|=k|G_{vw}|$. From Lemma~\ref{lem7} we deduce $\beta(G_{uv})\cong G_{uv}$ and $\beta(G_{vw})\cong G_{vw}$. Moreover, $t_1,\dots,t_k$ are pairwise distinct by Lemma~\ref{lem4}, which implies $|T|\geqslant k$. Therefore,
$$
k|S|\leqslant|T||S|\leqslant|G_v\cap A||S|=|G_v|\leqslant k|G_{uv}|=k|\beta(G_{uv})|\leqslant k|S|
$$
and
$$
k|S|\leqslant|T||S|\leqslant|G_v\cap A||S|=|G_v|\leqslant k|G_{vw}|=k|\beta(G_{vw})|\leqslant k|S|.
$$
Hence $|G_v\cap A|=|T|=k$, $|G_v|=k|G_{uv}|=k|G_{vw}|$ and $\beta(G_{uv})=\beta(G_{vw})=S$. As a consequence, $T=\{t_1,\dots,t_k\}$ by Lemma~\ref{lem4}, and so $\Gamma\cong\Cos(T^k,D,g)\cong\Gamma(T)$. Also, \eqref{eq5} implies that $G_{uvw}=G_{uvw}\cap(A\times R)$. If $G_{uv}\cap S=1$ or $G_{vw}\cap S=1$, then Lemma~\ref{lem7} implies $S=\beta(G_{uv})\cong G_{uv}\lesssim A$ or $S=\beta(G_{vw})\cong G_{vw}\lesssim A$, contradicting Lemma~\ref{lem9}. Thus $G_{uv}\cap S$ and $G_{vw}\cap S$ are both nontrivial normal subgroups of $\beta(G_{uv})=\beta(G_{vw})=S$.

From now on suppose that $G$ is primitive and so $S$ is a primitive subgroup of $\Sy_k$. By Lemma~\ref{lem8}, $G_{uv}\cap R=G_{vw}\cap R=1$, so we derive that $G_{uv}\cap S$ and $G_{vw}\cap S$ are both regular normal subgroups of $S$. Moreover, $G_{uv}\cap S\neq G_{vw}\cap S$ for otherwise $G_{uvw}\cap S=G_{uv}\cap S$ would be a regular subgroup of $S$, contrary to the condition $G_{uvw}=G_{uvw}\cap(A\times R)\leqslant A\times R$. This indicates that $S$ has at least two regular normal subgroups, and so $\Soc(S)=N^{2n}$ for some nonabelian simple group $N$ and positive integer $n$ such that $k=|N|^n$ and $S/(G_{uv}\cap S)$ has a normal subgroup isomorphic to $N^n$. It follows that
$$
N^n\lesssim S/(G_{uv}\cap S)=\beta(G_{uv})/(G_{uv}\cap S)\cong\alpha(G_{uv})/(G_{uv}\cap A)\cong\alpha(G_{uv})\leqslant A,
$$
and then Lemma~\ref{lem10} implies that $n=1$ and $N\cong T$. Thus, $\Soc(S)\cong T^2$, and so $\Soc(\Hol(T))\leqslant S\leqslant\Hol(T)$.
\end{proof}

We are now ready to give the main theorem of this section. Recall $X$ defined in~\eqref{eq7}.

\begin{theorem}\label{thm3}
Let $T$ be a nonabelian simple group, $k\geqslant2$ be an interger, and $T^k\leqslant G\leqslant T^k.(\Out(T)\times\Sy_k)$ with diagonal action on the set $V$ of right cosets of $\{(t,\dots,t)\mid t\in T\}$ in $T^k$. Suppose $\Gamma$ is a connected $(G,2)$-arc-transitive digraph with vertex set $V$. Then $k=|T|$, $\Gamma\cong\Gamma(T)$, $\Aut(\Gamma)=X$ is vertex-primitive of type SD with socle $T^k$ and the conjugation action on the $k$ copies of $T$ permutation isomorphic to $\Hol(T)$, and $\Gamma$ is not $3$-arc-transitive.
\end{theorem}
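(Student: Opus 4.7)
The plan is to bootstrap off Lemma~\ref{lem12}, applying it first to the given $G$ to identify $\Gamma$, then to $\Aut(\Gamma)$ to pin down the full automorphism group, and to finish with Lemma~\ref{lem14}.

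First I would invoke Lemma~\ref{lem12} with the given $G$, whose hypotheses match ours exactly, to obtain $k=|T|$, $T=\{t_1,\dots,t_k\}$, and $\Gamma\cong\Gamma(T)$ of Construction~\ref{Construction}. By Lemma~\ref{lem13} the group $X$ defined in~\eqref{eq7} satisfies $X\leqslant Y:=\Aut(\Gamma)$, and as recorded just after~\eqref{eq7}, $X$ is vertex-primitive on $V$ of type $\mathrm{SD}$ with socle $M=T^{|T|}$ and $X/M\cong\Hol(T)$ inducing $\Hol(T)\leqslant\Sy_k$ on the $k$ simple factors of $M$; in particular $Y$ is vertex-primitive.

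To pin down $Y$, I would aim to show $M\lhd Y$. Granting this, $Y\leqslant N_{\Sym(V)}(M)=M.(\Out(T)\times\Sy_k)$, so $Y$ meets the hypotheses of Lemma~\ref{lem12}. Applied to the vertex-primitive $Y$, that lemma yields $\Soc(\Hol(T))\leqslant S_Y\leqslant\Hol(T)$, where $S_Y$ is the projection of $Y_v$ into $\Sy_k$; the inclusion $X\leqslant Y$ then forces $S_Y\supseteq\Hol(T)$, hence $S_Y=\Hol(T)$. The proof of Lemma~\ref{lem12} further gives $|Y_v|=k|S_Y|=|T|^3|\Out(T)|=|X_v|$ in view of~\eqref{eq8}, so $Y_v=X_v$ and therefore $Y=X$.

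The main obstacle is proving $M\lhd Y$, since \emph{a priori} $Y$ could contain conjugates $M^y\ne M$. My approach is to analyse $N:=\Soc(Y)$. Because $Y$ is vertex-primitive, $N$ is a direct product of isomorphic simple groups transitive on $V$. An elementary abelian $N$ is impossible, as $|V|=|T|^{|T|-1}$ is not a prime power (since $|T|$ is composite), and a regular $N$ is excluded by Proposition~\ref{prop1} (because $\Gamma$ is $|T|$-regular, not a directed cycle). Hence $N\cong L^r$ for some nonabelian simple $L$ with non-regular action. Now $C_Y(M)\leqslant C_{\Sym(V)}(M)=1$ since the full diagonal $D$ is self-normalising in $T^{|T|}$. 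Combined with the identity $C_Y(F^*(Y))\leqslant Z(F^*(Y))$, this forces each simple direct factor $T_i$ of $M$ to lie inside $N=F^*(Y)$: otherwise $T_i\cap N=1$ and $T_i$ would embed into $Y/C_Y(N)\hookrightarrow\Aut(N)=\Aut(L)\wr\Sy_r$ in a way incompatible with the constraints $|V|=|T|^{|T|-1}$ imposes on the possible O'Nan-Scott types of $Y$. This gives $M\leqslant N$; an order/type comparison together with the restrictions of the O'Nan-Scott Theorem then forces $N=M$, and hence $M\lhd Y$.

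Finally, since $\Aut(\Gamma)=X$, Lemma~\ref{lem14} (which shows $\Gamma(T)$ is not $(X,3)$-arc-transitive) immediately yields the last claim that $\Gamma$ is not $3$-arc-transitive.
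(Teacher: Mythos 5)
Your overall skeleton is the same as the paper's: apply Lemma~\ref{lem12} to $G$ to get $k=|T|$ and $\Gamma\cong\Gamma(T)$, use Lemma~\ref{lem13} to get $X\leqslant Y:=\Aut(\Gamma)$, pin down $Y=X$ by a second application of Lemma~\ref{lem12} together with the order count~\eqref{eq8}, and conclude with Lemma~\ref{lem14}. That part is fine, and your observation that everything reduces to showing $M\lhd Y$ (so that $Y\leqslant N_{\Sym(V)}(M)=M.(\Out(T)\times\Sy_k)$ and Lemma~\ref{lem12} applies to $Y$) is exactly the right reduction.

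The genuine gap is your argument that $M\lhd Y$. The paper does not prove this from scratch: it invokes Baddeley--Praeger \cite[Theorem~1.2]{BP2003} on primitive overgroups of quasiprimitive groups to conclude that $Y$ is primitive of type SD with the same socle as $X$. Your replacement argument breaks down at its central step. After correctly excluding an abelian or regular socle $N=\Soc(Y)$, you claim that if a simple factor $T_i$ of $M$ is not contained in $N$ then $T_i\cap N=1$ and $T_i$ embeds into $Y/C_Y(N)\hookrightarrow\Aut(N)$ ``in a way incompatible with the constraints $|V|=|T|^{|T|-1}$ imposes on the possible O'Nan--Scott types of $Y$.'' Since $C_Y(N)=1$, the whole group $Y$ embeds into $\Aut(N)$, so the embedding of $T_i$ carries no information by itself; no contradiction is actually derived, and none is evident --- ruling out the competing types (in particular PA and AS overgroups) is precisely the hard content of the Baddeley--Praeger/Praeger inclusion theorems. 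Likewise, even granting $M\leqslant N=L^r$, the assertion that ``an order/type comparison \dots forces $N=M$'' is not carried out: $T^{|T|}\leqslant L^r$ with $|T|^{|T|-1}$ dividing $|L|^r$ does not by itself force $L\cong T$ and $r=|T|$ (one must control how $M$ projects onto the factors of $N$). So as written the proof is incomplete at its crux; either supply a genuine proof of these two claims or, as the paper does, cite \cite[Theorem~1.2]{BP2003} (noting also that $Y\neq\Alt(V),\Sym(V)$ because $Y$ preserves the nontrivial relation $\rightarrow$).
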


\begin{proof}
We have by Lemma~\ref{lem12} that $k=|T|$, $\{t_1,\dots,t_k\}=T$ and $\Gamma\cong\Gamma(T)$. In the following, we identify $\Gamma$ with $\Gamma(T)$. Let $X$ be as in~\eqref{eq7} and $Y=\Aut(\Gamma(T))$. Then $X$ is vertex-primitive of type $\mathrm{SD}$ with socle $T^{|T|}$, and the conjugation action of $X$ on the $|T|$ copies of $T$ is permutation isomorphic to $\Hol(T)$. Also, $X\leqslant Y$ by Lemma~\ref{lem13}. It follows from~\cite[Theorem~1.2]{BP2003} that $Y$ is vertex-primitive of type $\mathrm{SD}$ with the same socle of $X$. Then again by Lemma~\ref{lem12} we have $Y_v\leqslant\Aut(T)\times\Hol(T)$. Thus by~\eqref{eq8} $Y_v=X_x$. Since $X$ is vertex-transitive, it follows that $Y=XY_v=X$, and so $\Gamma$ is not $3$-arc-transitive by Lemma~\ref{lem14}.
\end{proof}

\section{Product action on the vertex set}

In this section, we study $(G,s)$-arc-transitive digraphs with vertex set $\Delta^m$ such that $G$ acts on $\Delta^m$ by product action. We first prove Theorem~\ref{thm2}.

\vskip0.1in
\noindent\emph{Proof of Theorem~\ref{thm2}.}
Let $G_1$ be the stabiliser in $G$ of the first coordinate and $\pi_1$ be the projection of $G_1$ into $\Sym(\Delta)$. Then $\pi_1(G_1)=H$. Since $N$ is normal in $H$ and transitive on $\Delta$, $N^m$ is normal in $G$ and transitive on $\Delta^m=V$. Hence Corollary~\ref{lem17} implies that $\Gamma$ is $(N^m,s-1)$-arc-transitive. In particular, since $s\geqslant2$, $N^m$ is transitive on the set of arcs of $\Gamma$, and so $\Gamma$ has arc set $A=\{u^n\rightarrow v^n\mid n\in N^m\}$ for any arc $u\rightarrow v$ of $\Gamma$.

Let $\alpha\in\Delta$, $u=(\alpha,\dots,\alpha)\in V$ and $v=(\beta_1,\dots,\beta_m)$ be an out-neighbour of $u$ in $\Gamma$. By Lemma~\ref{lem18} we have $G=N^mG_{uv}$. Let $\varphi$ be the projection of $G$ to $\Sy_m$, and we regard $\varphi(G)$ as a subgroup of $\Sym(V)$.  Then
$$
\varphi(G)\leqslant H^mG=H^m(N^mG_{uv})=H^mG_{uv}.
$$
Take any $i$ in $\{1,\dots,m\}$. Since $\varphi(G)$ is transitive on $\{1,\dots,m\}$, there exists $x\in\varphi(G)$ such that $1^x=i$ and  $x=yz$ with $y=(y_1,\dots,y_m)\in H^m$ and $z\in G_{uv}$. Note that $z\in G_{uv}$ and $x\in\Sy_m$ both fix $u$. We conclude that $y$ fixes $u$ and hence $y_j\in H_\alpha$ for each $j$ in $\{1,\dots,m\}$. Also, $y^{-1}x=z\in G_{uv}\leqslant G_v$ implies $\beta_1^{y_1^{-1}}=\beta_i$. It follows that for each $i$ in $\{1,\dots,m\}$ there exists $h_i\in H_\alpha$ with $\beta_i^{h_i}=\beta_1$. Let $w=(\beta_1,\dots,\beta_1)\in V$, $h=(h_1,\dots,h_m)\in(H_\alpha)^m$ and $\Gamma^h$ be the digraph with vertex set $V$ and arc set $A^h:=\{u^{nh}\rightarrow v^{nh}\mid n\in N^m\}$. It is evident that $u^h=u$, $v^h=w$, and $h$ gives an isomorphism from $\Gamma$ to $\Gamma^h$. Let $\Sigma$ be the digraph with vertex set $\Delta$ and arc set $I:=\{\alpha^n\rightarrow\beta_1^n\mid n\in N\}$. Then $N\leqslant\Aut(\Sigma)$, and viewing $N^mh=hN^m$ we have
\begin{eqnarray*}
A^h&=&\{u^{hn}\rightarrow v^{hn}\mid n\in N^m\}=\{u^n\rightarrow w^n\mid n\in N^m\}\\
&=&\{(\alpha^{n_1},\dots,\alpha^{n_m})\rightarrow(\beta_1^{n_1},\dots,\beta_1^{n_m})\mid n_1,\dots,n_m\in N\}.
\end{eqnarray*}
This implies that $\Gamma^h=\Sigma^m$. Consequently, $\Gamma\cong\Sigma^m$.

For any $\beta\in\Delta$, denote by $\delta(\beta)$ the point in $V=\Delta^m$ with all coordinates equal to $\beta$. Then $\delta(\alpha)\rightarrow\delta(\beta_1)$ in $\Gamma^h$ since $\alpha\rightarrow\beta_1$ in $\Sigma$. Let $x$ be any element of $H$. Then since
\[
H=h_1^{-1}Hh_1=h_1^{-1}\pi_1(G_1)h_1=\pi_1(h)^{-1}\pi_1(G_1)\pi_1(h)=\pi_1(h^{-1}G_1h),
\]
there exists $g\in h^{-1}G_1h$ such that $x=\pi_1(g)$. As $g$ is an automorphism of $\Gamma^h$ and $\delta(\alpha)\rightarrow\delta(\beta_1)$ in $\Gamma^h$, we have $\delta(\alpha)^g\rightarrow\delta(\beta_1)^g$ in $\Gamma^h$. Comparing first coordinates, this implies that $\alpha^{\pi_1(g)}\rightarrow\beta_1^{\pi_1(g)}$ in $\Sigma$, which turns out to be $\alpha^x\rightarrow\beta_1^x$ in $\Sigma$. In other words, $\alpha^x\rightarrow\beta_1^x$ is in $I$. It follows that
\[
I=\{\alpha^{xn}\rightarrow\beta_1^{xn}\mid n\in N\}=\{\alpha^{nx}\rightarrow\beta_1^{nx}\mid n\in N\}
\]
as $xN=Nx$. Hence $x$ preserves $I$, and so $H\leqslant\Aut(\Sigma)$.

Let $\alpha_0\rightarrow\alpha_1\rightarrow\dots\rightarrow\alpha_s$ be an $s$-arc of $\Sigma$. Since $\Gamma$ is $(N^m,s-1)$-arc-transitive and $N^m=h^{-1}N^mh$, it follows that $\Gamma^h$ is $(N^m,s-1)$-arc-transitive. Then for any $(s-1)$-arc $\alpha'_1\rightarrow\dots\rightarrow\alpha'_s$ of $\Sigma$, since $\delta(\alpha_1)\rightarrow\dots\rightarrow\delta(\alpha_s)$ and $\delta(\alpha'_1)\rightarrow\dots\rightarrow\delta(\alpha'_s)$ are both $(s-1)$-arcs of $\Gamma^h$, there exists $(n_1,\dots,n_m)\in N^m$ such that $\delta(\alpha_i)^{(n_1,\dots,n_m)}=\delta(\alpha'_i)$ for each $i$ with $1\leqslant i\leqslant s$. Hence $\alpha_i^{n_1}=\alpha'_i$ for each $i$ with $1\leqslant i\leqslant s$. Therefore, $\Sigma$ is $(N,s-1)$-arc-transitive. Let $\Sigma^+(\alpha_{s-1})$ be the set of out-neighbours of $\alpha_{s-1}$ in $\Sigma$. Take any $\beta\in\Sigma^+(\alpha_{s-1})$. As $\delta(\alpha_s)$ and $\delta(\beta)$ are both out-neighbours of $\delta(\alpha_{s-1})$ in $\Gamma^h$ and $\Gamma^h$ is $(h^{-1}Gh,s)$-arc-transitive, there exists $g\in h^{-1}Gh\leqslant H\wr\Sy_m$ such that $g$ fixes $\delta(\alpha_0),\delta(\alpha_1),\dots,\delta(\alpha_{s-1})$ and maps $\delta(\alpha_s)$ to $\delta(\beta)$. Write $g=(x_1,\dots,x_m)\sigma$ with $(x_1,\dots,x_m)\in H^m$ and $\sigma\in\Sy_m$. Then $x_1$ fixes $\alpha_0,\alpha_1,\dots,\alpha_{s-1}$ and maps $\alpha_s$ to $\beta$. This shows that $\Sigma$ is $(H,s)$-arc-transitive, completing the proof.
\qed

\vskip0.1in
\noindent\textsc{Acknowledgements.} This research was supported by Australian Research Council grant DP150101066. The authors would like to thank the anonymous referee for helpful comments.

\end{document}